\newtheorem{thm}{Theorem}[section]
\newtheorem{lemma}[thm]{Lemma}
\newcommand{\R}{\mathbb{R}}
\newcommand{\C}{\mathbb{C}}
\newcommand{\N}{\mathbb{N}}
\newcommand{\E}[2]{E^{(#1)}_{#2,\n,\p}}
\newcommand{\EE}[2]{{\mathcal E}^{(#1)}_{#2,\n,\p}}
\newcommand{\F}[1]{F^{(#1)}_{#1,\n,\p}}
\newcommand{\FF}[1]{{\mathcal F}^{(#1)}_{#1,\n,\p}}
\renewcommand{\P}[1]{P^{(#1)}_{#1,\n,\p}}
\newcommand{\PP}[1]{{\mathcal P}^{(#1)}_{#1,\n,\p}}
\newcommand{\Q}[1]{Q^{(#1)}_{#1,\n,\p}}
\newcommand{\QQ}[1]{{\mathcal Q}^{(#1)}_{#1,\n,\p}}
\newcommand{\G}[1]{G^{(#1)}_{\n,\p}}
\renewcommand{\H}[1]{H^{(#1)}_{\n,\p}}
\newcommand{\I}[1]{I^{(#1)}_{\n,\p}}
\newcommand{\J}[1]{J^{(#1)}_{\n,\p}}
\renewcommand{\a}[1]{\alpha^{(#1)}_{\n,\p}}
\renewcommand{\b}[1]{\beta^{(#1)}_{\n,\p}}
\renewcommand{\c}[1]{\gamma^{(#1)}_{\n,\p}}
\renewcommand{\d}[1]{d^{(#1)}_{\n,\p}}
\newcommand{\p}{\mathbf{p}}
\newcommand{\q}{\mathbf{q}}
\renewcommand{\r}{\mathbf{r}}
\newcommand{\n}{\mathbf{n}}
\newcommand{\0}{\mathbf{0}}
\numberwithin{equation}{section}
\begin{document}
\title[Expansions of fundamental solution]{Expansions for a fundamental solution of Laplace's equation on $\R^3$ in 5-cyclidic harmonics}

\author{Howard S.~Cohl${}^{1}$ and Hans Volkmer${}^2$}

\address{$^1$Applied and Computational Mathematics Division,
National Institute of Standards and Technology, Gaithersburg, MD
20899-8910, USA}
\address{$^2$Department of Mathematical Sciences, University of Wisconsin--Milwaukee,
P.~O. Box 413, Milwaukee, WI 53201, USA}

\begin{abstract}
We derive eigenfunction expansions for a fundamental solution of Laplace's
equation in three-dimensional Euclidean space in 5-cyclidic coordinates.
There are three such expansions in terms of internal and external 5-cyclidic harmonics of 
first, second and third kind. The internal and external 5-cyclidic harmonics are expressed by 
solutions of a Fuchsian differential equation with five regular singular points. 
\end{abstract}

\maketitle

\section{Introduction}\label{intro}
Expansions for a fundamental solution of
Laplace's equation on $\R^3$ in terms of solutions found by the method 
of separation of variables in a suitable curvilinear coordinate system
are known for a long time.
For example, when we choose spherical coordinates, we obtain the 
well-known expansion \cite{MorseFesh}
\begin{equation}\label{1:exp1}
\frac{1}{\|\r-\r'\|}=\sum_{\ell=0}^\infty \frac{r^\ell}{(r')^{\ell+1}}\sum_{m=-\ell}^{\ell} \frac{(\ell-m)!}{(\ell+m)!}{\mathsf P}_\ell^m(\cos\theta)
{\mathsf P}_\ell^m(\cos\theta')e^{im(\phi-\phi')},
\end{equation} 
where $\|\r\|<\|\r'\|$ ($\|\r\|$ denotes the Euclidian norm of $\r\in\R^3$), and $r,\theta,\phi$, $r',\theta',\phi'$ are the spherical coordinates of $\r$ and $\r'$, respectively. 
The expansion \eqref{1:exp1} contains the Ferrers function of the first kind
(associated Legendre function of the first kind on-the-cut)
${\mathsf P}_\ell^m$ \cite[(14.3.1)]{NIST}.
We may write expansion \eqref{1:exp1} in the more concise form 
\begin{equation}\label{1:exp2}
\frac{1}{\|\r-\r'\|}=\sum_{\ell=0}^\infty\sum_{m=-\ell}^\ell G_\ell^m(\r)\overline{H_\ell^m(\r')},
\end{equation}
where $G_\ell^m:\R^3\to\C$ is the internal spherical harmonic 
\begin{equation}\label{1:G}
 G_\ell^m(\r):=\left(\frac{(\ell-m)!}{(\ell+m)!}\right)^{1/2}r^\ell {\mathsf P}_\ell^m(\cos\theta)e^{im\phi},
\end{equation}
and $H_\ell^m:\R^3\to\C$ is the external spherical harmonic
\begin{equation}\label{1:H}
 H_\ell^m(\r'):=\left(\frac{(\ell-m)!}{(\ell+m)!}\right)^{1/2}(r')^{-\ell-1}{\mathsf P}_\ell^m(\cos\theta')e^{im\phi'}.
\end{equation} 
In this paper we derive expansions analogous to \eqref{1:exp2} for the 5-cyclidic coordinate system \cite[(6.24)]{Miller} in place of spherical coordinates.
The coordinate surfaces of 5-cyclidic coordinates are triply-orthogonal confocal cyclides. There are three kinds of internal and external 5-cyclidic harmonics, one for each family of coordinate surfaces, and three corresponding expansions.
The authors already introduced internal 5-cyclidic harmonics in \cite{CohlVolkcyclide1}. As far as we know, the definition of external 5-cyclidic harmonics and 
the expansions analogous to \eqref{1:exp2} are given in this paper for the first time. We also derive some needed additional properties  of internal 5-cyclidic harmonics.
In the definitions of internal and external spherical harmonics \eqref{1:G}, \eqref{1:H} there appear only the associated Legendre functions apart from elementary functions.
In the case of 5-cyclidic coordinates the definition of internal and external harmonics requires solutions of a Fuchsian differential equation
with 5 regular singularities. The particular solutions of interest are eigenfunctions of 
two-parameter Sturm-Liouville eigenvalue problems; see \cite{CohlVolkcyclide1}.  

In Maxime B\^{o}cher's 1891 dissertation,
{\it Ueber die Reihenentwickelungen der Potentialtheorie} \cite{Bocher},
it was shown that the 3-variable Laplace equation can be solved using
separation of variables in seventeen conformally distinct quadric and cyclidic
coordinate systems. These coordinates have coordinate surfaces which are zero sets
for polynomials in $x,y,z$ with degree at most two and four respectively.  The
Helmholtz equation
on $\R^3$ admits simply separable solutions in the same eleven quadric coordinate
systems that the Laplace equation admits separable solutions \cite{EisenStac}.
The Laplace equation also admits ${R}$-separable solutions in an additional six
conformally distinct coordinate systems \cite[Table 17, page 210]{Miller}.
Unlike the Laplace equation, the
Helmholtz equation does not admit solutions via ${R}$-separation of variables.
The appearance
of ${R}$-separation is intrinsic to the
existence of conformal symmetries for a linear partial differential
equation (see Boyer, Kalnins \& Miller (1976) \cite{BoyKalMil}),
i.e.~dilatations, special conformal transformations, inversions and reflections.
The theory of separation of variables from a Lie group theoretic viewpoint has
been treated in Miller (1977) \cite{Miller}.  In Miller's book,
separation of variables for the Laplace equation on $\R^3$ was treated and
the general asymmetric ${R}$-separable 5-cyclidic coordinate system was
introduced (see \cite[Table 17, System 12]{Miller}).  In regard to this coordinate
system, and the corresponding separable harmonic solutions, Miller
indicates that ``Very little is known about the solutions.''

To the authors' knowledge, eigenfunction expansions for the fundamental solution (the $1/r$ potential)
have been obtained for the following coordinate systems. 
See \cite{CohlVolkmer,Hob,MacRobert47,MorseFesh} for expansions
in spherical, circular/parabolic/elliptic cylinder, oblate/prolate
spheroidal, parabolic, bi-spherical and toroidal coordinates.  
The expansion in confocal ellipsoidal coordinates is treated in \cite{Volkmer,Heine}. 
This paper is a stepping-stone for derivations of eigenfunction expansions for
a fundamental solution of Laplace's equation in coordinate systems where these
expansions are not known such as paraboloidal, flat-ring cyclide,
flat-disk cyclidic, bi-cyclide, cap-cyclide and
3-cyclide \cite[Table 17, System 13]{Miller}) coordinates.

The eigenfunction
expansions are often connected
with integral identities (such as the integral of Lipschitz
\cite[Section 13.2]{Watson} and the Lipschitz-Hankel integral
\cite[Section 13.21]{Watson} which appear in cylindrical coordinates),
addition theorems (such as Neumann's and Graf's generalization
of Neumann's addition theorem \cite[Section 11.1, Section 11.3]{Watson}
which appear in cylindrical coordinates and the addition theorem for
spherical harmonics \cite{WenAvery} which appears in spherical coordinates),
generating functions for orthogonal polynomials (such as the generating
function for Legendre polynomials \cite[(18.12.4)]{NIST} which appears in
spherical coordinates), and special function expansion identities (such as Heine's
reciprocal square root identity \cite[(3.11)]{CohlDominici} which appears in circular
cylindrical coordinates). In this setting, one may perform eigenfunction expansions
for a fundamental solution of Laplace's equation in alternative separable
coordinate systems to obtain new special function summation and integration identities
which often have interesting geometrical interpretations (see for
instance \cite{Cohlerratum12, CRTB,CTRS}).
Eigenfunction expansions for fundamental solutions of elliptic partial
differential equations have been extended to more general separable linear partial
differential equations \cite{Cohl12pow} and to partial differential equations on
Riemannian manifolds of constant curvature \cite{CohlKalII}.  

The outline of this paper is as follows.
The 5-cyclidic coordinate system $s_1,s_2,s_3$ is discussed in Section 2. 
In Section 3, we consider
internal and external 5-cyclidic harmonics of the second kind which are related to the coordinate surfaces $s_2={\rm const}$. We start with functions of the second kind because
they are slightly easier to treat than the harmonics of the first and third kind related to the coordinate surfaces $s_1={\rm const}$, $s_3={\rm const}$, respectively.
In Section 4, as one of our main results, we obtain the expansion of the fundamental solution of Laplace's equation in terms of internal and external 5-cyclidic harmonics of the second kind. The proof is based on $(a)$ an integral representation of the external harmonics in terms of internal harmonics given
in Section 4, and $(b)$
the completeness property of internal harmonics obtained in \cite{CohlVolkcyclide1}. In Sections 5,6 we treat 5-cyclidic harmonics of the first kind. In Sections 7,8 we treat 5-cyclidic harmonics of the third kind.

\section{5-cyclidic coordinates}\label{coordinates}
We work on $\R^3$ with Cartesian coordinates $x,y,z$, and we use
the notations $\r=(x,y,z)$ and $\|\r\|=(x^2+y^2+z^2)^{1/2}$.
Fix $a_0<a_1<a_2<a_3$.
The 5-cyclidic coordinates of a point $\r\in\R^3$ are the solutions $s=s_1,s_2,s_3$ of the equation
\begin{equation}\label{2:surface}
\frac{(\|\r\|^2-1)^2}{s-a_0}+ \frac{4x^2}{s-a_1}+\frac{4y^2}{s-a_2}+ \frac{4z^2}{s-a_3}=0
\end{equation}
(strictly speaking, this equation is multiplied by the common denominator of the left-hand side),
where
\[ a_0\le s_1\le a_1\le s_2\le a_2\le s_3\le a_3 ;\]
see \cite[Section 4]{CohlVolkcyclide1}.
On the set
\begin{equation}\label{2:R}
R:=\{\r:  x,y,z>0,\, \|\r\|<1\},
\end{equation}
the map
$(x,y,z)\in R\mapsto (s_1,s_2,s_3)\in(a_0,a_1) \times (a_1,a_2)\times(a_2,a_3)$ is bijective.
The inverse map is given by
\begin{equation}\label{2:coord1}
x=\frac{x_1}{1+x_0},\quad y=\frac{x_2}{1+x_0},\quad z=\frac{x_3}{1+x_0},\\
\end{equation}
where
\begin{equation}\label{2:coord2}
x_j^2=\frac{\prod_{i=1}^3 (s_i-a_j)}{\prod_{j\ne i=0}^3 (a_i-a_j)},\quad x_j>0 .
\end{equation}
We note that each $s_i$ is a continuous function on $\R^3$. Of particular interest are the sets
\begin{eqnarray*}
 A_1&:=&\{\r: s_1=s_2\}=\{(0,y,z): g_1(y,z)=0\},\\
 A_2&:=&\{\r: s_2=s_3\}=\{(x,0,z): g_2(x,z)=0\},
\end{eqnarray*}
where
\begin{eqnarray*}
 g_1(y,z)&:=&\frac{(y^2+z^2-1)^2}{a_1-a_0}+\frac{4y^2}{a_1-a_2}+\frac{4z^2}{a_1-a_3},\\
 g_2(x,z)&:=&\frac{(x^2+z^2-1)^2}{a_2-a_0}+\frac{4x^2}{a_2-a_1}+\frac{4z^2}{a_2-a_3}.
\end{eqnarray*}
Each set $A_1, A_2$ consists of two closed curves; see Figures \ref{2:fig1}, \ref{2:fig2}.
The function $s_1$ is (real-)analytic on $\R^3\setminus A_1$, $s_2$ is analytic on $\R^3\setminus(A_1\cup A_2)$, and $s_3$ is analytic on
$\R^3\setminus A_2$.
We will also encounter the sets
\begin{eqnarray*}
K_1&:=&\{\r: \|\r\|<1, s_1=a_1\}=\{(0,y,z): y^2+z^2<1, g_1(y,z)\ge 0\},\\
L_1&:=&\{\r: s_2=a_1\}=\{(0,y,z): g_1(y,z)\le 0\},\\
M_1&:=&\{\r: \|\r\|>1, s_1=a_1\}=\{(0,y,z): y^2+z^2>1, g_1(y,z)\ge 0\},\\
K_2&:=&\{\r: z>0, s_3=a_2\}=\{(x,0,z): z>0, g_2(x,z)\le 0\},\\
L_2&:=&\{\r: s_2=a_2\}=\{(x,0,z): g_2(x,z)\ge 0\} ,\\
M_2&:=&\{\r: z<0, s_3=a_2\}=\{(x,0,z): z<0, g_2(x,z)\le 0\}.
\end{eqnarray*}
The sets $A_1,K_1,L_1,M_1$ are subsets of the plane $x=0$, and $A_2,K_2,L_2,M_2$ are subsets of the plane $y=0$; see Figures \ref{2:fig1}, \ref{2:fig2}.

\begin{figure}[h]
\begin{center}
\includegraphics[height=10cm,width=10cm]{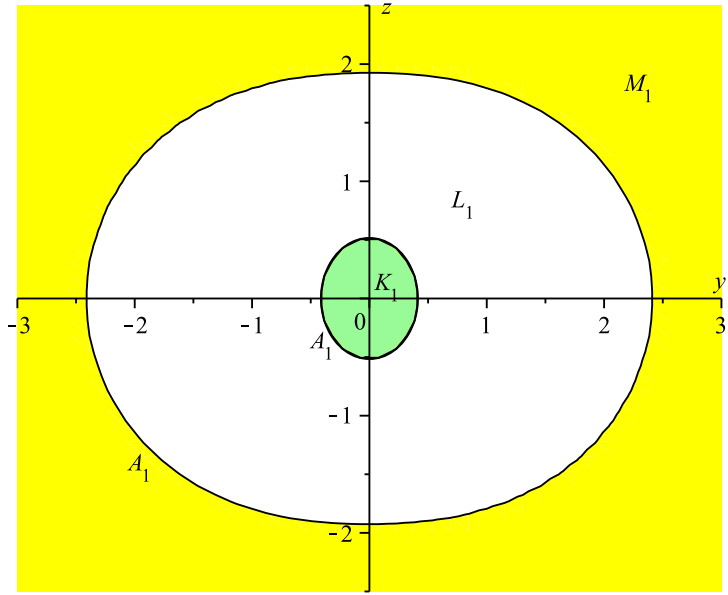}
\end{center}
\vspace*{-5mm}
\caption{Curves $A_1$ and regions $K_1,L_1,M_1$ for $a_j=j$.}
\label{2:fig1}
\end{figure}

\begin{figure}[h]
\begin{center}
\includegraphics[height=10cm,width=10cm]{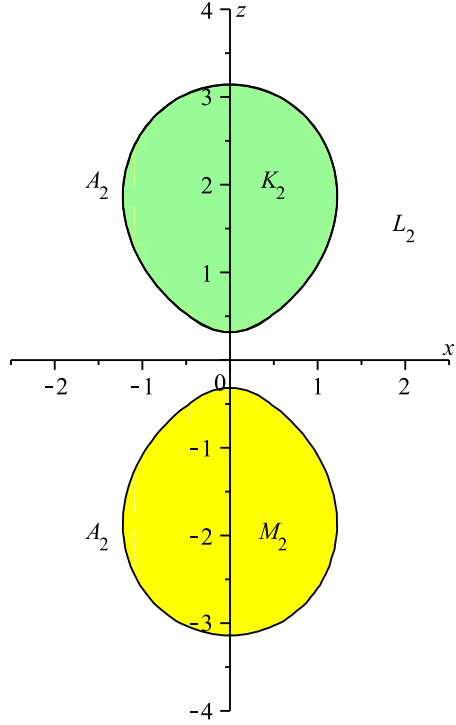}
\end{center}
\vspace*{-5mm}
\caption{Curves $A_2$ and regions $K_2,L_2,M_2$ for $a_j=j$.}
\label{2:fig2}
\end{figure}

We denote the inversion at the unit sphere on $\R^3$ by
\begin{equation}\label{2:inversion}
\sigma_0(\r):=\|\r\|^{-2} \r ,
\end{equation}
and the reflections at the coordinate planes by
\begin{equation}\label{2:reflections}
\sigma_1(x,y,z):=(-x,y,z),\,\sigma_2(x,y,z):=(x,-y,z),\,\sigma_3(x,y,z):=(x,y,-z) .
\end{equation}
We note that the functions $s_1,s_2,s_3$ are invariant under $\sigma_j$, $j=0,1,2,3$.

We define auxiliary functions $\chi_j:\R^3\to\R$, $j=0,1,2,3$, by
\begin{eqnarray*}
\chi_0(\r)&:=&{\rm sgn}(1-\|\r\|) (s_1-a_0)^{1/2},\\
\chi_1(\r)&:=&{\rm sgn}(x) ((s_2-a_1)(a_1-s_1))^{1/2},\\
\chi_2(\r)&:=&{\rm sgn}(y) ((s_3-a_2)(a_2-s_2))^{1/2},\\
\chi_3(\r)&:=&{\rm sgn}(z) (a_3-s_3)^{1/2}.
\end{eqnarray*}

\begin{lemma}\label{2:l1}
The functions $\chi_j$, $j=0,1,2,3$, are continuous on $\R^3$. $\chi_0, \chi_2$ are analytic on $\R^3\setminus A_1$, and  $\chi_1,\chi_3$ are analytic on $\R^3\setminus A_2$. Moreover,
\begin{equation}\label{2:chisigma}
 \chi_j\circ\sigma_i =\begin{cases} \chi_j & \text{if $i\ne j$},\\ -\chi_j & \text{if $i=j$.}\end{cases}
\end{equation}
\end{lemma}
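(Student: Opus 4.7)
The plan is to rewrite (2.1) as a polynomial identity and derive four explicit product identities from Vieta's formulas, then factor each $\chi_j$ so that the sign prefactor absorbs into an analytic linear function. Clearing denominators in (2.1) shows that $s_1,s_2,s_3$ are the three real roots of the cubic
$$P(s,\r):=(\|\r\|^2-1)^2(s-a_1)(s-a_2)(s-a_3)+4x^2(s-a_0)(s-a_2)(s-a_3)+4y^2(s-a_0)(s-a_1)(s-a_3)+4z^2(s-a_0)(s-a_1)(s-a_2)$$
in $s$, whose leading coefficient is $(\|\r\|^2-1)^2+4\|\r\|^2=(\|\r\|^2+1)^2$. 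Evaluating at $s=a_j$ kills three of the four summands, yielding
$$(\|\r\|^2+1)^2\prod_{i=1}^{3}(a_j-s_i)=P(a_j,\r),\qquad j=0,1,2,3,$$
with $P(a_0,\r)=(\|\r\|^2-1)^2(a_0-a_1)(a_0-a_2)(a_0-a_3)$ and, for $j=1,2,3$, only the term containing $x^2$, $y^2$ or $z^2$ survives; for instance $P(a_1,\r)=4x^2(a_1-a_0)(a_1-a_2)(a_1-a_3)$.

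Solving each identity for the factor appearing inside the square root of $\chi_j$ yields clean factorisations. For $j=0$, dividing by $(s_2-a_0)(s_3-a_0)>0$ gives $s_1-a_0=(\|\r\|^2-1)^2 h_0(\r)$ with $h_0>0$, and the identity $\operatorname{sgn}(1-\|\r\|)\,|\|\r\|^2-1|=1-\|\r\|^2$ gives
$$\chi_0(\r)=(1-\|\r\|^2)\sqrt{h_0(\r)}.$$
Parallel manipulations produce $\chi_1=x\sqrt{h_1}$, $\chi_2=y\sqrt{h_2}$, $\chi_3=z\sqrt{h_3}$, with $h_j>0$ and with denominators $(s_3-a_1)$, $(a_2-s_1)$, and $(a_3-s_1)(a_3-s_2)$ respectively. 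For $\chi_1,\chi_2$ the denominator is linear in a single $s_i$ that is analytic on the relevant complement, so $h_1,h_2$ are analytic there. For $\chi_0,\chi_3$ the denominator is symmetric in a pair of $s_i$'s that can coalesce, but as an elementary symmetric function of the two roots of the quadratic $P(s,\r)/[(\|\r\|^2+1)^2(s-s_k)]$ (with $s_k$ the isolated analytic root) it nonetheless remains analytic off the appropriate bad set. This establishes analyticity of $\chi_0,\chi_2$ on $\R^3\setminus A_1$ and of $\chi_1,\chi_3$ on $\R^3\setminus A_2$.

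Continuity on all of $\R^3$ is then immediate: the sign prefactor of $\chi_j$ can jump only across $\{\|\r\|=1\},\{x=0\},\{y=0\},\{z=0\}$ for $j=0,1,2,3$ respectively, and on each of these sets the corresponding Vieta identity forces the square-root factor to vanish (for example, $x=0\Rightarrow P(a_1,\r)=0\Rightarrow s_1=a_1$ or $s_2=a_1$, so $(s_2-a_1)(a_1-s_1)=0$). The symmetry relations (2.8) follow from the stated invariance $s_i\circ\sigma_k=s_i$ for all $i,k$: the square-root factor in $\chi_j$ is $\sigma_k$-invariant, and the sign prefactor is flipped precisely by $\sigma_j$ (for $\sigma_0$ one uses $\|\sigma_0\r\|=\|\r\|^{-1}$) and preserved by $\sigma_k$ with $k\ne j$.

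The main obstacle is justifying analyticity of $(s_2-a_0)(s_3-a_0)$ across $A_2$ and of $(a_3-s_1)(a_3-s_2)$ across $A_1$, where the individual roots lose analyticity. This is resolved by a symmetric-function argument: once $s_1$ (resp.\ $s_3$) is known analytic on $\R^3\setminus A_1$ (resp.\ $\R^3\setminus A_2$), dividing the cubic $P(s,\r)/(\|\r\|^2+1)^2$ by the linear factor $(s-s_1)$ (resp.\ $(s-s_3)$) leaves a quadratic in $s$ with analytic coefficients, and any symmetric polynomial in the other two roots is then itself analytic.
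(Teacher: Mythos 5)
Your argument is correct, and it reaches the lemma by a genuinely more global route than the paper does. The paper works pointwise near the singular planes: for each $\chi_j$ it solves the rational form of the defining equation \eqref{2:surface} for the relevant factor (e.g.\ $a_3-s_3=4z^2/f(\r)$ with $f$ locally positive and analytic), and for $\chi_1,\chi_2$ it must split the coordinate plane into the three pieces $K_2\cup M_2$, $L_2$ and $A_2$ (resp.\ their analogues), using a different instance of \eqref{2:surface} on each piece and, on $A_2$ itself, the trick of subtracting the equations for $s=s_2$ and $s=s_3$ to produce an expression that stays analytic where the two roots coalesce. Your Vieta identities $(\|\r\|^2+1)^2\prod_{i}(a_j-s_i)=P(a_j,\r)$ package all of these local computations into four global factorizations $\chi_0=(1-\|\r\|^2)\sqrt{h_0}$, $\chi_1=x\sqrt{h_1}$, $\chi_2=y\sqrt{h_2}$, $\chi_3=z\sqrt{h_3}$, with each $h_j$ continuous and bounded below by a positive constant on all of $\R^3$ (since the surviving denominators such as $s_3-a_1$ or $(a_3-s_1)(a_3-s_2)$ are bounded away from zero by the interlacing $a_0\le s_1\le a_1\le s_2\le a_2\le s_3\le a_3$); this gives continuity and the symmetries \eqref{2:chisigma} essentially for free and reduces analyticity to analyticity of a single denominator. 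Your substitute for the paper's subtraction trick --- deflating the monic cubic by the linear factor of the root that remains analytic and reading the elementary symmetric functions of the other two roots off the quotient's coefficients --- is exactly what is needed for $(s_2-a_0)(s_3-a_0)$ across $A_2$ and for $(a_3-s_1)(a_3-s_2)$ across $A_1$, and it is arguably cleaner and easier to verify than the three-case analysis. The trade-off is modest: your method yields explicit closed forms for the $h_j$ and a uniform treatment of all four functions, while the paper's local method avoids writing out the cubic and its deflation and adapts more directly to settings where one only has the equation in its rational form.
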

\begin{proof}
Consider first $\chi_3$. The function $s_3$ is continuous, and $s_3=a_3$ if and only if $z=0$. Therefore, $\chi_3$ is continuous. In order to prove that $\chi_3$ is analytic on $\R^3\setminus A_2$,
it is enough to show that $\chi_3$ is analytic at every point of the plane $z=0$.
Let $\r_0=(x_0,y_0,0)$. There is $\epsilon\in(0,1)$ such that $s_3\ne a_2$ for $\r\in B_\epsilon(\r_0)=\{\r:  \|\r-\r_0\|<\epsilon\}$.
Then \eqref{2:surface} with $s=s_3$ implies
\[ a_3-s_3= \frac{4z^2}{f(\r)}\quad\text{for $\r\in B_\epsilon(\r_0)$}, \]
where
\[ f(\r):=\frac{(\|\r\|^2-1)^2}{s_3-a_0}+\frac{4x^2}{s_3-a_1}+\frac{4y^2}{s_3-a_2} \]
is positive and analytic on $B_\epsilon(\r_0)$.
Therefore, we obtain
\[ \chi_3(\r)=\frac{2z}{(f(\r))^{1/2}}\quad\text{for $\r\in B_\epsilon(\r_0)$}, \]
and this shows that $\chi_3$ is analytic at $\r_0$. $\chi_0$ is treated similarly.

Consider next $\chi_2$.  The functions $s_2, s_3$ are continuous, and $(a_2-s_2)(s_3-a_2)=0$ if and only if $y=0$. Thus $\chi_2$ is continuous.
In order to prove that $\chi_2$ is analytic on $\R^3\setminus A_1$, it is enough to show that $\chi_2$ is analytic at all points of the plane $y=0$ which do not lie in $A_1$.
Suppose first $\r_0=(x_0,0,z_0)\in (K_2\cup M_2)\setminus A_2$. There is $\epsilon>0$ such that $s_3\ne a_3$ and $s_2\ne a_2$ for $\r\in B_\epsilon(\r_0)$.
Then, by \eqref{2:surface} with $s=s_3$, we obtain
\[ s_3-a_2=\frac{4y^2}{g(\r)},\]
where
\[ g(\r):=-\frac{(\|\r\|^2-1)^2}{s_3-a_0}-\frac{4x^2}{s_3-a_1}-\frac{4z^2}{s_3-a_3} \]
is analytic on $B_\epsilon(\r_0)$. Since $g(\r_0)=-g_2(x_0,z_0)>0$, $g$ is also positive on $B_\epsilon(\r_0)$ for sufficiently small $\epsilon>0$.
Then
\[ \chi_2(\r)=(a_2-s_2)^{1/2} \frac{2y}{(g(\r))^{1/2}} \quad\text{for $\r\in B_\epsilon(\r_0)$.}
\]
This shows that $\chi_2$ is analytic at $\r_0$ provided that $\r_0\notin A_1$.
In a similar way, by using \eqref{2:surface} with $s=s_2$, we show that $\chi_2$ is analytic at all points $\r_0\in L_2\setminus A_2$.
Finally, by subtracting equations \eqref{2:surface} with $s=s_2,s_3$ from each other, we show that $\chi_2$ is analytic at all points $\r_0\in A_2$.
$\chi_1$ is treated similarly.

The symmetries \eqref{2:chisigma} follow from the definition of $\chi_j$.
\end{proof}

Solving the Laplace equation
\begin{equation}\label{2:Laplace}
\frac{\partial^2 u}{\partial x^2}+\frac{\partial^2 u}{\partial y^2}+\frac{\partial^2 u}{\partial z^2} =0
\end{equation}
by the method of separation of variables, we find  solutions
\begin{equation}\label{2:sepsol}
u(\r):=(\|\r\|^2+1)^{-1/2}w_1(s_1)w_2(s_2)w_3(s_3),\quad s_i\in(a_{i-1},a_i).
\end{equation}
Each function $w=w_1,w_2,w_3$ satisfies the Fuchsian equation
\begin{equation}\label{2:Fuchs}
\prod_{j=0}^3 (s-a_j)\left[w''+\frac12 \sum_{j=0}^3 \frac{1}{s-a_j} w'\right] +\left(\frac{3}{16}s^2+\lambda_1s+\lambda_2\right)w =0 ,
\end{equation}
where $\lambda_1,\lambda_2$ are separation constants; see \cite{CohlVolkcyclide1}.
This equation has five regular singularities at $a_0,a_1,a_2,a_3,\infty$. The exponents at each finite singularity are $0$ or $\frac12$.

The function $u(\r)$ defined in \eqref{2:sepsol} is harmonic for all choices of solutions $w_i$ to \eqref{2:Fuchs}. However, it is harmonic only in the open set obtained from $\R^3$ by removing the
coordinate planes $x=0, y=0, z=0$ and the unit sphere $\|\r\|=1$.
In order to obtain globally defined harmonic functions we have to select the Frobenius solutions $w$ at the finite singularities, that is, solutions that are either analytic at $a_j$ or of the form
$(s-a_j)^{1/2} g(s)$ with $g(s)$ analytic at $s=a_j$.
It is impossible to choose the parameters $\lambda_1,\lambda_2$ in such a way that each solution $w_i$, $i=1,2,3$, is a nontrivial Frobenius solution belonging to either one of the exponents
$0$ or $\frac12$ at both end points $a_{i-1}$, $a_i$. If this were possible \eqref{2:sepsol} would define a function which is harmonic in the whole space $\R^3$ (as we see later) and converges to $0$ as $\|\r\|\to\infty$. But such a function would have to be identically zero. However, as shown in \cite{CohlVolkcyclide1}, we can determine special values of $\lambda_1,\lambda_2$ (eigenvalues) such that
two solutions (either (1) $w_2$, $w_3$, or (2) $w_1$, $w_3$, or (3) $w_1$, $w_2$) are nontrivial Frobenius solution at both end points simultaneously.
These cases lead to 5-cyclidic harmonics of the first, second and third kind. If the remaining function $w_i$ in case $(i)$ is chosen appropriately, we obtain internal
or external 5-cyclidic harmonics.

\section{5-cyclidic harmonics of the second kind}\label{harmonics2}

In \cite[Section VII]{CohlVolkcyclide1} we introduced special solutions $w_i(s_i)=\E{2}{i}(s_i)$ to equation \eqref{2:Fuchs} for eigenvalues $\lambda_j=\lambda^{(2)}_{j,\n,\p}$, $j=1,2$, for every $\n\in\N_0^2$, $\p=(p_0,p_1,p_2,p_3)\in\{0,1\}^4$.
If $\n=(n_1,n_3)$ then $n_i$ denotes the number of zeros of $\E{2}{i}$ in $(a_{i-1},a_i)$ for $i=1,3$.
The subscript $p_j$ describes the behavior of the solutions at the endpoint $a_j$:
We have
\[ \E{2}{i}(s_i)=(s_i-a_{i-1})^{p_{i-1}/2}(a_i-s_i)^{p_i/2} \EE{2}{i}(s_i),\quad s_i\in(a_{i-1},a_i),
\]
where $\EE{2}{1}$ is analytic on $[a_0,a_1]$, $\EE{2}{2}$ is analytic on $[a_1,a_2)$  (but not at $a_2$), and
$\EE{2}{3}$ is analytic on $[a_2,a_3]$.

According to \eqref{2:sepsol} the function
\begin{equation}\label{3:G0}
\G{2}(\r):=(\|\r\|^2+1)^{-1/2} \E{2}{1}(s_1)\E{2}{2}(s_2)\E{2}{3}(s_3),\quad \r\in R,
\end{equation}
is harmonic on $R$.
In order to analytically extend $\G{2}$ we use the functions $\chi_j$ introduced in Section 2.
We set
\begin{equation}\label{3:G}
\G{2}(\r):= (\|\r\|^2+1)^{-1/2} \prod_{j=0}^3 (\chi_j(\r))^{p_j} \prod_{i=1}^3 \EE{2}{i}(s_i) \quad \text{if $s_2\ne a_2$}
\end{equation}
which is consistent with \eqref{3:G0}. The condition $s_2\ne a_2$ is equivalent to $\r\in\R^3\setminus L_2$. We call $\G{2}$ an internal 5-cyclidic harmonic of the second kind.

\begin{thm}\label{3:t1}
Let $\n\in\N_0^2$ and $\p\in\{0,1\}^4$. Then $\G{2}$ is harmonic on $\R^3\setminus L_2$.
Moreover,
\begin{equation}\label{3:sym1}
 \G{2}(\sigma_j(\r))=(-1)^{p_j}  \G{2}(\r)\quad\text{for $j=1,2,3$} ,
\end{equation}
and
\begin{equation}\label{3:sym2}
  \G{2}(\sigma_0(\r))=(-1)^{p_0}  \|\r\|\G{2}(\r) .
\end{equation}
\end{thm}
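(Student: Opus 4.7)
I plan to prove Theorem~\ref{3:t1} in two logically independent parts. The symmetries \eqref{3:sym1} and \eqref{3:sym2} are a direct algebraic consequence of \eqref{3:G}: evaluating at $\sigma_j(\r)$, each $s_i$ is $\sigma_j$-invariant (Section~\ref{coordinates}), so $\prod_{i=1}^3\EE{2}{i}(s_i)$ is unchanged; Lemma~\ref{2:l1} gives $\chi_k\circ\sigma_j=(-1)^{\delta_{kj}}\chi_k$, whence $\prod_{k=0}^3(\chi_k\circ\sigma_j)^{p_k}=(-1)^{p_j}\prod_{k=0}^3\chi_k^{p_k}$; the prefactor $(\|\r\|^2+1)^{-1/2}$ is $\sigma_j$-invariant for $j=1,2,3$, while $\|\sigma_0(\r)\|=\|\r\|^{-1}$ yields $(\|\sigma_0(\r)\|^2+1)^{-1/2}=\|\r\|\,(\|\r\|^2+1)^{-1/2}$, producing the extra factor $\|\r\|$ in \eqref{3:sym2}. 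For harmonicity, I would first establish it on the large open set $\Omega:=\R^3\setminus(L_2\cup A_1\cup A_2)$ via analytic continuation from $R$, and then extend across the one-dimensional curves $A_1,A_2$ using \eqref{3:sym1} and a Schwarz-type reflection argument.

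\textbf{Harmonicity on $\Omega$.} Continuity of $\G{2}$ on $\R^3\setminus L_2$ is immediate from Lemma~\ref{2:l1} and from the continuity of each $s_i$ and each $\EE{2}{i}$ on the relevant interval (open at $a_2$ for $i=2$, which is why $L_2=\{s_2=a_2\}$ is excluded). Real-analyticity of $\G{2}$ on $\Omega$ follows by combining the analyticity of the $\chi_j$ on $\Omega$ (Lemma~\ref{2:l1}), of the $s_i$ on $\Omega$ (Section~\ref{coordinates}), and the real-analyticity of the $\EE{2}{i}$ on the interior of their parameter intervals. On the open subset $R\subset\Omega$ the formula \eqref{3:G} reduces to \eqref{3:G0}, that is, to the separation-of-variables ansatz \eqref{2:sepsol} with the eigenvalue choice that makes $u$ solve Laplace's equation; hence $\Delta\G{2}=0$ on $R$. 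The set $\Omega$ is a connected open subset of $\R^3$ (removing the two-dimensional closed set $L_2\subset\{y=0\}$ and the two one-dimensional closed curves $A_1,A_2$ does not disconnect $\R^3$), so analytic continuation of the real-analytic function $\Delta\G{2}$ gives $\Delta\G{2}\equiv 0$ on $\Omega$.

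\textbf{Crossing $A_1,A_2$; main obstacle.} Given $\r_0\in A_1\setminus L_2$, I would pick a $\sigma_1$-symmetric ball $B\subset\R^3\setminus(A_2\cup L_2)$ about $\r_0$; the only portion of $A_1\cup A_2\cup L_2$ meeting $B$ is $A_1\cap B\subset\{x=0\}$, so $\G{2}$ is continuous on $B$, harmonic on each of $B\cap\{x>0\}$ and $B\cap\{x<0\}$, and satisfies $\G{2}\circ\sigma_1=(-1)^{p_1}\G{2}$ by \eqref{3:sym1}. In the odd case $p_1=1$, the symmetry forces $\G{2}\equiv 0$ on $B\cap\{x=0\}$ and classical Schwarz reflection through $\{x=0\}$ supplies a harmonic extension to $B$ coinciding with $\G{2}$ by uniqueness. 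In the even case $p_1=0$, I would compare $\G{2}$ with the unique harmonic solution $v$ of the Dirichlet problem on $B$ having the continuous boundary values $\G{2}|_{\partial B}$; both functions are $\sigma_1$-symmetric, so their difference $w$ is continuous on $B$, $\sigma_1$-even, vanishes on $\partial B$, and is harmonic on $B\setminus(A_1\cap B)$. A Weyl-type test against $\sigma_1$-even test functions $\varphi$, combined with integration by parts on $B\setminus F_\epsilon$ (with $F_\epsilon$ a tubular neighbourhood of $A_1\cap B$) and the fact that $\partial_x\G{2}=0$ on the smooth part $(\{x=0\}\cap B)\setminus A_1$, then forces $w\equiv 0$. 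The analogous argument with $\sigma_2$ and $\{y=0\}$ disposes of $A_2\setminus L_2$. The main technical obstacle is precisely this last step: the coalescence of $s_1,s_2$ on $A_1$ (respectively $s_2,s_3$ on $A_2$) destroys the individual real-analyticity of the separated factors $\EE{2}{i}(s_i)$, so analytic continuation alone stops short of these codimension-two focal curves, and the parity identities \eqref{3:sym1} are what make the gluing possible.
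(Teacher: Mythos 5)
Your symmetry argument and your proof of harmonicity away from the exceptional curves coincide with the paper's: the symmetries are read off from \eqref{3:G} together with Lemma \ref{2:l1} and the $\sigma_j$-invariance of the $s_i$, and harmonicity on the set where \eqref{3:G} is a composition of analytic functions follows by continuing $\Delta\G{2}=0$ outward from $R$. One remark before the main point: $A_2=\{s_2=s_3\}$ forces $s_2=s_3=a_2$, so $A_2\subset L_2$ and is already excluded from the domain $\R^3\setminus L_2$; there is nothing to glue across $A_2$, and only $A_1$ (which lies in $L_1$, not in $L_2$) needs attention. This is why the paper's proof mentions only $A_1$.

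The genuine gap is in your treatment of $A_1$ in the even case $p_1=0$. The curve $A_1$ has codimension two, and the excision argument you sketch does not close with the estimates available: the boundary of a tubular neighbourhood $F_\epsilon$ of $A_1$ has surface area $O(\epsilon)$, while the interior gradient estimate for the bounded harmonic function $w$ gives only $|\nabla w|=O(1/\epsilon)$ there, so the boundary term $\int_{\partial F_\epsilon}\varphi\,\partial_\nu w\,dS$ is merely $O(1)$ and does not visibly vanish. The evenness of $w$ and the identity $\partial_x\G{2}=0$ on $(\{x=0\}\cap B)\setminus A_1$ do not rescue this, since the problematic boundary is $\partial F_\epsilon$, not the plane $x=0$; an attempt to integrate by parts over $B\cap\{|x|>\delta\}$ instead runs into $\int |\partial_x w|\,dS=O(\log(1/\delta))$ near $A_1$. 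What you are trying to prove here is precisely the statement that a compact curve is removable for locally bounded harmonic functions, i.e.\ that $A_1$ is polar; the paper invokes this directly (\cite[Cor.~5.2.3]{AG}) rather than reproving it, and you should either do the same or supply the capacity/logarithmic-cutoff argument that the Weyl-lemma sketch is standing in for. The paper also offers a second, self-contained route worth knowing: for $\p=\0$ the product $\E{2}{1}(s_1)\E{2}{2}(s_2)$ is a symmetric function of $(s_1,s_2)$ built from two branches of one analytic solution, and since $s_1+s_2$ and $s_1s_2$ are analytic across $A_1$ even though $s_1,s_2$ separately are not, $\G{2}$ is in fact real-analytic at every point of $A_1$; no reflection or distributional argument is needed. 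Your Schwarz-reflection argument for $p_1=1$ is correct, but it covers only half the parities.
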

\begin{proof}
By \eqref{3:G} and Lemma \ref{2:l1}, $\G{2}$ is a composition of continuous functions, and thus it is continuous on $\R^3\setminus L_2$.
As a composition of analytic functions, $\G{2}$ is analytic and thus harmonic on $\R^3\setminus (A_1\cup L_2)$.
The set $A_1$ is a removable line singularity of $\G{2}$. This can be seen in two different ways.
1) We may appeal to the general theory of harmonic functions. $A_1$ is a polar set, and we may apply \cite[Cor.\ 5.2.3]{AG}.
2) We can show directly that $\G{2}$ is analytic at each point of $A_1$ by the method used in the proof of \cite[Lemma 6.1]{CohlVolkcyclide1}.
For example, take the simplest case $\p=(0,0,0,0)$. Then \eqref{3:G0} holds for all $\r\in \R^3\setminus L_2$, and the product $\E{2}{1}(s_1)\E{2}{2}(s_2)$ is  analytic at each point of $A_1$.
This is because $\E{2}{1}(s)$ and $\E{2}{2}(s)$ are analytic extensions of each other, 
and $s_1,$ $s_2$ enter symmetrically. Note that $s_1s_2$ and $s_1+s_2$ are analytic at each point of $A_1$ although $s_1$, $s_2$ are not analytic there.

The symmetry properties of $\G{2}$ also follow from \eqref{3:G} and Lemma \ref{2:l1}.
\end{proof}

If $U(\r)$ is a harmonic function then its Kelvin transformation
\[ V(\r)=\|\r\|^{-1}U(\sigma_0(\r))\]
is also harmonic \cite[page 232]{Kellogg}.
Equation \eqref{3:sym2} states that $\G{2}$ is invariant or changes sign under the Kelvin transformation if $p_0=0$ or $p_0=1$, respectively.
We see that $L_2$ is a ``surface singularity'' of $\G{2}$ which is not removable (it is not a polar set).
In fact, $\G{2}$ cannot be harmonic on $\R^3$ because it would be identically zero otherwise.

Let $\F{2}$ be the Frobenius solution to the Fuchsian equation \eqref{2:Fuchs} (with $\lambda_j=\lambda^{(2)}_{j,\n,\p}$)   on $(a_1,a_2)$
belonging to the exponent $\frac{p_2}{2}$ at $s_2=a_2$, uniquely
determined by the Wronskian condition
\begin{equation}\label{3:Wronskian}
\omega(s)\left(\E{2}{2}(s_2)\frac{d}{ds_2}\F{2}(s_2)-\F{2}(s_2)\frac{d}{ds_2}\E{2}{2}(s_2)\right) =1,
\end{equation}
where
\begin{equation}\label{3:omega}
 \omega(s):=\left|(s-a_0)(s-a_1)(s-a_2)(s-a_3)\right|^{1/2} .
\end{equation}
This definition is possible because we know that $\E{2}{2}(s_2)$ is not a Frobenius solution belonging to the exponent $\frac{p_2}2$ at $s_2=a_2$.
Now we define external 5-cyclidic harmonics of the second kind by
\begin{equation}\label{3:H0}
\H{2}(\r):=(\|\r\|^2+1)^{-1/2}\E{2}{1}(s_1)\F{2}(s_2)\E{2}{3}(s_3),\quad \r\in R .
\end{equation}
In order to analytically extend $\H{2}$ we write
\[ \F{2}(s_2)=(s_2-a_1)^{p_1/2}(a_2-s_2)^{p_2/2} \FF{2}(s_2),\quad s_2\in(a_1,a_2),
\]
where $\FF{2}$ is analytic on $(a_1,a_2]$ (but not at $a_1$).
Then we define
\begin{equation}\label{3:H}
\H{2}(\r):= (\|\r\|^2+1)^{-1/2} \prod_{j=0}^3 (\chi_j(\r))^{p_j} \EE{2}{1}(s_1)\FF{2}(s_2)\EE{2}{3}(s_3) \quad \text{if $s_2\ne a_1$}.
\end{equation}
The condition $s_2\ne a_1$ is equivalent to $\r\in \R^3\setminus L_1$.

\begin{thm}\label{3:t2}
Let $\n\in\N_0^2$ and $\p\in\{0,1\}^4$. Then $\H{2}$
is harmonic on $\R^3\setminus L_1$.
The functions $\H{2}$ share the symmetries \eqref{3:sym1}, \eqref{3:sym2} with $\G{2}$.
Moreover,
\begin{equation}\label{3:est1}
\H{2}(\r) =O(\|\r\|^{-1})\quad\text{as $\|\r\|\to\infty$},
\end{equation}
and
\begin{equation}\label{3:est2}
\|\nabla \H{2}(\r)\| =O(\|\r\|^{-2})\quad\text{as $\|\r\|\to\infty$} .
\end{equation}
\end{thm}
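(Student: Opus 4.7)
The plan is to mirror the proof of Theorem \ref{3:t1} for the harmonicity and symmetry claims, and then to exploit the Kelvin-type identity \eqref{3:sym2} together with analyticity of $\H{2}$ near the origin to obtain the decay estimates \eqref{3:est1}, \eqref{3:est2}.

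For harmonicity I would first establish continuity of $\H{2}$ on $\R^3 \setminus L_1$ directly from \eqref{3:H} and Lemma \ref{2:l1}: the condition $s_2 \ne a_1$, i.e.\ $\r \notin L_1$, is exactly what is needed for $\FF{2}(s_2)$ to be defined, while every other factor is continuous on all of $\R^3$. Next, a factor-by-factor analysis of analyticity domains in \eqref{3:H}, namely $(\|\r\|^2+1)^{-1/2}$ entire, $\prod_{j=0}^{3}\chi_j^{p_j}$ analytic off $A_1 \cup A_2$, $\EE{2}{1}(s_1)$ analytic off $A_1$, $\EE{2}{3}(s_3)$ analytic off $A_2$, and $\FF{2}(s_2)$ analytic off $A_1 \cup A_2 \cup L_1$, shows that $\H{2}$ is analytic on $\R^3 \setminus (A_1 \cup A_2 \cup L_1)$. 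Since at each point of $A_1$ one has $s_1 = s_2 = a_1$, so $A_1 \subset L_1$, this is the same as $\R^3 \setminus (A_2 \cup L_1)$. On the open set $R$ the function coincides with the separated solution \eqref{3:H0} of Laplace's equation, so $\Delta \H{2} = 0$ on $R$; because $A_2 \cup L_1$ is bounded, $\R^3 \setminus (A_2 \cup L_1)$ is connected, and the identity theorem for analytic functions extends $\Delta \H{2} = 0$ to all of it. Finally, $A_2$ is a finite union of real-analytic arcs, hence polar, so \cite[Cor.~5.2.3]{AG} removes the apparent singularities on $A_2$ and gives harmonicity on $\R^3 \setminus L_1$.

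The symmetries follow directly from \eqref{3:H} and \eqref{2:chisigma}: each $s_i$ is invariant under every $\sigma_j$, $\|\r\|$ is invariant under $\sigma_1,\sigma_2,\sigma_3$, and $(1+\|\sigma_0(\r)\|^2)^{-1/2} = \|\r\|(1+\|\r\|^2)^{-1/2}$, while $\prod_j \chi_j^{p_j}$ picks up the sign $(-1)^{p_j}$ under $\sigma_j$ by \eqref{2:chisigma}, yielding \eqref{3:sym1} and \eqref{3:sym2} for $\H{2}$. For the decay estimates, one observes that $g_1(0,0) = (a_1-a_0)^{-1} > 0$, so $\0 \notin L_1$ and $\H{2}$ is analytic in a neighborhood of the origin. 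Substituting $\sigma_0(\r)$ for $\r$ in \eqref{3:sym2} and using $\sigma_0\circ\sigma_0 = \mathrm{id}$ with $\|\sigma_0(\r)\| = \|\r\|^{-1}$ rearranges to
\[ \H{2}(\r) = (-1)^{p_0}\|\r\|^{-1}\H{2}(\sigma_0(\r)) . \]
Since $\sigma_0(\r) \to \0$ as $\|\r\| \to \infty$, the factor $\H{2}(\sigma_0(\r))$ remains bounded, giving \eqref{3:est1}; differentiating the displayed identity and invoking $\|D\sigma_0(\r)\| = O(\|\r\|^{-2})$ together with local boundedness of $\nabla \H{2}$ near $\0$ produces \eqref{3:est2}.

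The main obstacle is the bookkeeping of analyticity domains in the harmonicity step; once the observation $A_1 \subset L_1$ is in hand, the only removable-singularity set to treat is the polar curve $A_2$, exactly paralleling Theorem \ref{3:t1}. The asymptotic estimates then fall out cleanly, because $\0$ lies outside the singular set $L_1$ and the Kelvin-type identity \eqref{3:sym2} transports the local analyticity at the origin into the desired decay at infinity.
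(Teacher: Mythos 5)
Your proposal is correct and follows essentially the same route as the paper: the harmonicity and symmetry claims are obtained by repeating the argument of Theorem \ref{3:t1} (your explicit observation that $A_1\subset L_1$, so that only the polar set $A_2$ needs the removable-singularity treatment, is a detail the paper leaves implicit), and the estimates \eqref{3:est1}, \eqref{3:est2} are derived exactly as in the paper from the fact that the Kelvin transformation of $\H{2}$ is $\pm\H{2}$, which is analytic at $\0\notin L_1$.
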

\begin{proof}
The proof of analyticity and symmetry of $\H{2}$ is similar to that given for $\G{2}$ in Theorem \ref{3:t1}, and is omitted.
Estimates \eqref{3:est1} and \eqref{3:est2} follow easily from the observation that the Kelvin transformation of $\H{2}$ is $\pm \H{2}$ which is analytic at $\0\notin L_1$.
\end{proof}

\section{Expansion of the reciprocal distance in 5-cyclidic harmonics of second kind}\label{expansion2}

For given $d_2\in(a_1,a_2)$ we consider the ``5-cyclidic ring''
\begin{equation}\label{4:ring1}
D_2:=\{\r\in\R^3: s_2<d_2\},
\end{equation}or, equivalently,
\begin{equation}\label{4:ring2}
D_2=\{\r: \frac{(\|\r\|^2-1)^2}{d_2-a_0}+\frac{4x^2}{d_2-a_1}+\frac{4y^2}{d_2-a_2}+\frac{4z^2}{d_2-a_3}<0 \}.
\end{equation}
Note that each internal 5-cyclidic harmonic $\G{2}$ is harmonic in $D_2$ (and on its boundary), and each external 5-cyclidic harmonic is harmonic on $\R^3\setminus D_2$ (and on its boundary).

We represent external harmonics in terms of internal harmonics by a surface integral over the boundary $\partial D_2$ of the ring $D_2$ as follows.
\begin{thm}\label{4:t1}
Let $d_2\in(a_1,a_2)$, $\n\in\N_0^2$, $\p\in\{0,1\}^4$. Then
\begin{equation}\label{4:integralformula}
\H{2}(\r')=\frac{1}{4\pi\omega(d_2)\{\E{2}{2}(d_2)\}^2}
\int_{\partial D_2} \frac{\G{2}(\r)}{h_2(\r)\|\r-\r'\|}\, dS(\r)
\end{equation}
for all $\r'\in\R^3\setminus\bar D_2$.
The scale factor $h_2$ is given by
\begin{equation}\label{4:h2}
 16 \{h_2(\r)\}^2=\frac{(\|\r\|^2-1)^2}{(d_2-a_0)^2}+ \frac{4x^2}{(d_2-a_1)^2}+\frac{4y^2}{(d_2-a_2)^2}+ \frac{4z^2}{(d_2-a_3)^2}.
\end{equation}
\end{thm}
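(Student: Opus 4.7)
My plan is a Green's-representation argument split across the two sides of $\partial D_2$. Inside the bounded region $D_2$, both $\G{2}$ (harmonic there by Theorem~\ref{3:t1}) and $v(\r):=1/\|\r-\r'\|$ (harmonic there since $\r'\notin\bar D_2$) are harmonic, so Green's second identity yields
\begin{equation*}
0=\int_{\partial D_2}\bigl(\G{2}\,\partial_\nu v-v\,\partial_\nu\G{2}\bigr)\,dS
\end{equation*}
with $\nu$ the outward unit normal of $D_2$. On the unbounded complement $\R^3\setminus\bar D_2$, the same identity applied to the pair $(\H{2},v)$ produces a $-4\pi\H{2}(\r')$ contribution from the pole of $v$ at $\r'$; the boundary term on a sphere of radius $R\to\infty$ vanishes by the decay estimates \eqref{3:est1}--\eqref{3:est2}. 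After reversing the orientation of the normal on $\partial D_2$ this yields
\begin{equation*}
4\pi\H{2}(\r')=\int_{\partial D_2}\bigl(\H{2}\,\partial_\nu v-v\,\partial_\nu\H{2}\bigr)\,dS.
\end{equation*}

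The key observation is that the separated representations \eqref{3:G0} and \eqref{3:H0} differ only in their middle factor, so on $\{s_2=d_2\}$ the ratio $\H{2}/\G{2}$ is the constant $c:=\F{2}(d_2)/\E{2}{2}(d_2)$. Subtracting $c$ times the interior identity from the exterior one therefore cancels the $\partial_\nu v$ terms entirely, leaving
\begin{equation*}
4\pi\H{2}(\r')=-\int_{\partial D_2}v\,\partial_\nu(\H{2}-c\G{2})\,dS.
\end{equation*}
I would next compute $\partial_\nu(\H{2}-c\G{2})$ on $\partial D_2$ via the orthogonal-coordinate identity $\partial_\nu=h_2^{-1}\partial/\partial s_2$, where $h_2=\|\partial_{s_2}\r\|$ is the Lamé coefficient on that level surface. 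Because $\F{2}(s_2)-c\E{2}{2}(s_2)$ vanishes at $s_2=d_2$, the product rule applied to the separated form leaves only the derivative of that bracket, producing $\F{2}'(d_2)-c\E{2}{2}'(d_2)=W(d_2)/\E{2}{2}(d_2)$, which by the Wronskian normalization \eqref{3:Wronskian} equals $1/(\omega(d_2)\E{2}{2}(d_2))$. Regrouping the surviving $(\|\r\|^2+1)^{-1/2}\E{2}{1}(s_1)\E{2}{3}(s_3)$ factors as $\G{2}(\r)/\E{2}{2}(d_2)$ then produces the prefactor $1/(4\pi\omega(d_2)\{\E{2}{2}(d_2)\}^2)$ in \eqref{4:integralformula}.

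The main technical point I expect to need care with is the identification of \eqref{4:h2} with the Lamé coefficient $\|\partial_{s_2}\r\|$ on $\{s_2=d_2\}$. Writing \eqref{2:surface} as $F(\r,s)=0$, the standard implicit-function identity gives $h_2=|\partial_sF|/|\nabla_\r F|$ on the level surface. The formula \eqref{4:h2} delivers $|\partial_sF|_{s=d_2}=16h_2^2$ immediately; evaluating $|\nabla_\r F|^2$ and using the constraint $F(\r,d_2)=0$ to cancel a cross-term in the expansion produces $|\nabla_\r F|=16h_2$, and the identification follows. Beyond this, the remaining work is sign bookkeeping: since $D_2=\{s_2<d_2\}$ the outward $\nu$ points in the direction of increasing $s_2$, and $\omega(d_2)$ and $W(d_2)$ are both positive on $(a_1,a_2)$, so the signs in the derivation must be matched against the positive prefactor stated in \eqref{4:integralformula}.
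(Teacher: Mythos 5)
Your argument is essentially the paper's own proof: Green's identity applied once inside $D_2$ to the pair $(\G{2},v)$ and once on the exterior region to $(\H{2},v)$ with the sphere at infinity handled by \eqref{3:est1}--\eqref{3:est2}, cancellation of the $\partial_\nu v$ terms using the proportionality of $\H{2}$ and $\G{2}$ on $\{s_2=d_2\}$, and evaluation of the surviving single-layer density via $\partial_\nu=h_2^{-1}\partial_{s_2}$ and the Wronskian \eqref{3:Wronskian}; the paper takes the combination $\E{2}{2}(d_2)\times(\text{exterior identity})+\F{2}{}(d_2)\times(\text{interior identity})$, which is exactly your subtraction of $c=\F{2}(d_2)/\E{2}{2}(d_2)$ times the interior identity, up to normalization. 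The one detail to add is that the separated forms \eqref{3:G0}, \eqref{3:H0} are valid only on the octant $R$, so the relations $\H{2}=c\,\G{2}$ and the resulting formula for the normal derivative hold a priori only on $\partial D_2\cap R$ and must be propagated to all of $\partial D_2$ using the shared symmetries \eqref{3:sym1}, \eqref{3:sym2}, as the paper does explicitly.
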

\begin{proof}
Let $D$ be an open bounded subset of $\R^3$ with smooth boundary.
For $u,v\in C^2(\bar D)$, Green's formula states that
\begin{equation}\label{4:Greenformula}
\int_D(u\Delta v-v\Delta u)\,d\r = \int_{\partial D} \left(u\frac{\partial v}{\partial\nu}-v\frac{\partial u}{\partial\nu}\right)\,dS,
\end{equation}
where $\frac{\partial u}{\partial\nu}$ is the outward normal derivative of $u$ on the boundary $\partial D$ of $D$.

We apply \eqref{4:Greenformula} to the domain $D=D_2$, and functions $u=G=\G{2}$, $v(\r)=\frac{1}{4\pi\|\r-\r'\|}$.
Since $u,v$ are harmonic on an open set containing $\bar D_2$ we obtain
\begin{equation}\label{4:eq1}
0=\int_{\partial D_2} \left(G \frac{\partial v}{\partial\nu}-v \frac{\partial G}{\partial\nu}\right) dS .
\end{equation}

We now use \eqref{4:Greenformula} a second time. We choose $R>0$ so large that the ball $B_R(\0)$ contains $\r'$ and
$\bar D_2$. Then we take $D=B_R(\0)-\bar D_2-B_\epsilon(\r')$ with small radius $\epsilon>0$.
Take $u=H=\H{2}$ and $v$ as before. Note that $u, v$ are harmonic on an open set containing $\bar D$.
By a standard argument \cite[Theorem 1, page 109]{McOwen}, taking the limit $\epsilon\to 0$, we obtain
\begin{equation}\label{4:eq2}
H(\r')=\int_{\partial B_R(\0)} \left(H\frac{\partial v}{\partial\nu}-v \frac{\partial H}{\partial\nu}\right) dS-
\int_{\partial D_2} \left(H\frac{\partial v}{\partial\nu}-v \frac{\partial H}{\partial\nu}\right) dS,
\end{equation}
where, in the second integral, $\frac{\partial}{\partial\nu}$ denotes the same derivative as in \eqref{4:eq1}.
The first integral in \eqref{4:eq2} tends to $0$ as $R\to\infty$ by \eqref{3:est1}, \eqref{3:est2}.
Therefore,
\begin{equation}\label{4:eq3}
H(\r')=-\int_{\partial D_2} \left(H\frac{\partial v}{\partial\nu}-v \frac{\partial H}{\partial\nu}\right) dS.
\end{equation}

We now multiply \eqref{4:eq1} by $F_2(d_2)$, $F_2:=\F{2}$, then multiply \eqref{4:eq3} by $E_2(d_2)$, $E_i:=\E{2}{i}$, and add these equations.
By \eqref{3:G0} and \eqref{3:H0} we have
\[ F_2(d_2)G(\r)=E_2(d_2)H(\r),\quad \r\in\partial D_2,\]
first for $\r\in \partial D_2\cap R$ but then for all $\r\in \partial D_2$ by shared symmetries \eqref{3:sym1}, \eqref{3:sym2} of $G,H$.
Therefore, we find
\begin{equation}\label{4:eq4}
E_2(d_2)H(\r')=\int_{\partial D_2} v\left(E_2(d_2)\frac{\partial H}{\partial \nu}-
F_2(d_2)\frac{\partial G}{\partial\nu}\right) dS .
\end{equation}
The normal derivative and the derivative with respect to $s_2$ are related by
\[ \frac{\partial}{\partial \nu}= \frac{1}{h_2} \frac{\partial}{\partial s_2},\]
where $h_2$ is the scale factor of the 5-cyclidic coordinate $s_2$ given by \eqref{4:h2}; see \cite[(22)]{CohlVolkcyclide1}.
Let $\r\in \partial D_2\cap R$ with 5-cyclidic
coordinates $s_1,s_2=d_2,s_3$.
Then
\begin{eqnarray*}
&&\hspace{-0.2cm}\left(E_2(d_2)\frac{\partial H}{\partial \nu}-
F_2(d_2)\frac{\partial G}{\partial\nu}\right)(\r) \\
&&\hspace{1.3cm}= E_2(d_2)\frac{\partial (\|\r\|^2+1)^{-1/2}}{\partial\nu}E_1(s_1)F_2(d_2)E_3(s_3)\\
&&\hspace{2.3cm} +E_2(d_2)(\|\r\|^2+1)^{-1/2}h_2^{-1} E_1(s_1)F_2'(d_2)E_3(s_3) \\
&&\hspace{2.3cm} -F_2(d_2)\frac{\partial (\|\r\|^2+1)^{-1/2}}{\partial\nu} E_1(s_1)E_2(d_2)E_3(s_3)\\
&&\hspace{2.3cm} -F_2(d_2)(\|\r\|^2+1)^{-1/2} h_2^{-1}E_1(s_1)E_2'(d_2)E_3(s_3)\\
&&\hspace{1.3cm}= h_2^{-1} (\|\r\|^2+1)^{-1/2}E_1(s_1)\left\{E_2(d_2)F_2'(d_2)
-E_2'(d_2)F_2(d_2)\right\} E_3(s_3) .
\end{eqnarray*}
We now use \eqref{3:Wronskian} and obtain
\begin{equation}\label{4:eq5}
\left(E_2(d_2)\frac{\partial H}{\partial \nu}-
F_2(d_2)\frac{\partial G}{\partial\nu}\right)(\r)= \frac{G(\r)}{h_2(\r)\omega(d_2)E_2(d_2)},
\end{equation}
which holds for all $\r\in \partial D_2$ because $G$ and $H$ share the symmetries \eqref{3:sym1}, \eqref{3:sym2}.
When we substitute \eqref{4:eq5} in \eqref{4:eq4}
we arrive at \eqref{4:integralformula}
\end{proof}

We obtain the expansion of the reciprocal distance in 5-cyclidic harmonics.

\begin{thm}\label{4:t2}
Let $\r,\r'\in\R^3$ with 5-cyclidic coordinates $s_2, s_2'$, respectively.
If $s_2<s_2'$ then
\begin{equation}\label{4:expansion}
\frac{1}{\|\r-\r'\|}=\pi\sum_{\n\in\N_0^2} \sum_{\p\in\{0,1\}^4}  \G{2}(\r)\H{2}(\r').
\end{equation}
\end{thm}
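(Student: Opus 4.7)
The plan is to combine the integral representation from Theorem \ref{4:t1} with the orthogonality and completeness of the internal second-kind harmonics on the cyclidic surface $\partial D_2$ that is established in \cite{CohlVolkcyclide1}. Given $\r$ and $\r'$ with cyclidic second coordinates $s_2<s_2'$, pick an intermediate $d_2\in(s_2,s_2')$; then $\r\in D_2$ and $\r'\in\R^3\setminus\bar D_2$, so the reciprocal distance $\tilde\r\mapsto 1/\|\tilde\r-\r'\|$ is smooth (indeed harmonic) on a neighborhood of $\bar D_2$ and in particular lies in the weighted space $L^2(\partial D_2,h_2^{-1}\,dS)$, which is the natural Hilbert space in which $\{\G{2}\}_{\n\in\N_0^2,\,\p\in\{0,1\}^4}$ forms a complete orthogonal system.

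Expanding in this basis yields, for $\tilde\r\in\partial D_2$,
\[
\frac{1}{\|\tilde\r-\r'\|}=\sum_{\n,\p}c_{\n,\p}(\r')\,\G{2}(\tilde\r),\qquad c_{\n,\p}(\r')=\frac{1}{\|\G{2}\|^2}\int_{\partial D_2}\frac{\G{2}(\tilde\r)}{h_2(\tilde\r)\,\|\tilde\r-\r'\|}\,dS(\tilde\r),
\]
where $\|\,\cdot\,\|$ is the $L^2(\partial D_2,h_2^{-1}\,dS)$ norm. Provided the Sturm--Liouville normalization in \cite{CohlVolkcyclide1} works out so that $\|\G{2}\|^2=4\,\omega(d_2)\,\{\E{2}{2}(d_2)\}^2$, Theorem \ref{4:t1} identifies the integral in the Fourier coefficient as $4\pi\,\omega(d_2)\,\{\E{2}{2}(d_2)\}^2\,\H{2}(\r')$. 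The $d_2$-dependent constants then cancel cleanly and one obtains $c_{\n,\p}(\r')=\pi\,\H{2}(\r')$, so \eqref{4:expansion} holds in $L^2(\partial D_2,h_2^{-1}\,dS)$.

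To pass from $L^2$-convergence on the surface to pointwise convergence in the full region $\{s_2<s_2'\}$, observe that with $\r'$ fixed outside $\bar D_2$ both sides of \eqref{4:expansion} are harmonic functions of $\r$ on the open ring $D_2$. Standard interior estimates (via the Poisson representation of the cyclidic ring $D_2$) convert $L^2$-convergence on $\partial D_2$ of the partial sums into uniform convergence on compact subsets of $D_2$, hence pointwise convergence everywhere in $D_2$. Since $d_2\in(s_2,s_2')$ was arbitrary, \eqref{4:expansion} extends to every $\r,\r'$ with $s_2<s_2'$.

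The main obstacle is the normalization bookkeeping in the middle paragraph: one must verify, using the product structure of $\G{2}$ on $\partial D_2$ in terms of $\E{2}{1}(s_1)$ and $\E{2}{3}(s_3)$, the Jacobian $h_2^{-1}dS$ written in the coordinates $(s_1,s_3)$ together with the sign-choice parameters, and the one-dimensional orthogonality relations for the Frobenius eigenfunctions $\E{2}{1},\E{2}{3}$ from \cite{CohlVolkcyclide1}, that $\|\G{2}\|^2$ equals exactly $4\,\omega(d_2)\,\{\E{2}{2}(d_2)\}^2$ so that all $d_2$-dependence cancels and the universal prefactor $\pi$ emerges. Secondary technical points are the rigorous $L^2$-to-pointwise upgrade and the fact that the completeness in \cite{CohlVolkcyclide1} must jointly span all sixteen symmetry classes indexed by $\p\in\{0,1\}^4$, so that an arbitrary (not symmetry-restricted) smooth function on $\partial D_2$ can be expanded.
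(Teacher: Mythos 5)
Your proposal follows essentially the same route as the paper: choose an intermediate value $d_2\in(s_2,s_2')$, expand the function $\q\mapsto\|\q-\r'\|^{-1}$ (harmonic on a neighborhood of $\bar D_2$) in the complete system of internal harmonics on $\partial D_2$, and identify each coefficient as $\pi\H{2}(\r')$ via Theorem \ref{4:t1}. The normalization $\|\G{2}\|^2=4\,\omega(d_2)\{\E{2}{2}(d_2)\}^2$ and the passage from surface convergence to pointwise convergence in $D_2$, which you flag as points to verify, are exactly what the paper imports from \cite[(95),(97)]{CohlVolkcyclide1}.
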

\begin{proof}
We pick $d_2$ such that $s_2<d_2<s_2'$, and consider the domain $D_2$ defined in \eqref{4:ring1}.
The function $f(\q):=\|\q-\r'\|^{-1}$ is harmonic on an open set containing $\bar D_2$.
Therefore, by \cite[(95),(97)]{CohlVolkcyclide1}, we have
\begin{equation}\label{4:expansion2}
\frac{1}{\|\r-\r'\|}= \sum_{\n\in\N_0^2} \sum_{\p\in\{0,1\}^4} \d{2}\G{2}(\r),
\end{equation}
where
\[ \d{2}:=\frac{1}{4\omega(d_2)\{\E{2}{2}(d_2)\}^2} \int_{\partial D_2} \frac{\G{2}(\q)}{h_2(\q)\|\q-\r'\|}\,dS(\q).\]
Using Theorem \ref{4:t1}, we obtain \eqref{4:expansion}.
\end{proof}

\section{5-cyclidic harmonics of the first kind}\label{harmonics1}

In \cite[Section V]{CohlVolkcyclide1} we introduced special solutions $w_i(s_i)=\E{1}{i}(s_i)$ to equation \eqref{2:Fuchs} for eigenvalues $\lambda_j=\lambda^{(1)}_{j,\n,\p}$, $j=1,2$, for every $\n\in\N_0^2$, $\p=(p_1,p_2,p_3)\in\{0,1\}^3$.
These functions have the form
\begin{eqnarray*}
 \E{1}{1}(s_1)&=&(a_1-s_1)^{p_1/2} \EE{1}{1}(s_1), \quad s_1\in(a_0,a_1),\\
 \E{1}{i}(s_i)&=&(s_i-a_{i-1})^{p_{i-1}/2}(a_i-s_i)^{p_i/2} \EE{1}{i}(s_i),\quad s_i\in(a_{i-1},a_i), i=2,3,
\end{eqnarray*}
where $\EE{1}{1}$ is analytic on $(a_0,a_1]$ (but not at $a_0$) while $\EE{1}{i}$ is analytic on $[a_{i-1},a_i]$ for $i=2,3$.
As in \cite[Section VI]{CohlVolkcyclide1} we define the internal 5-cyclidic harmonic of the first kind by
\begin{equation}\label{5:G0}
\G{1}(\r):=(\|\r\|^2+1)^{-1/2}  \E{1}{1}(s_1)\E{1}{2}(s_2)\E{1}{3}(s_3), \quad \r\in R.
\end{equation}
According to \eqref{2:sepsol}, $\G{1}$ is a harmonic function in the region $R$.
In order to analytically extend $\G{1}$ to a larger domain of definition, some preparations are necessary.

Let $\P{1}$ be the solution to \eqref{2:Fuchs} (with $\lambda_j=\lambda^{(1)}_{j,\n,\p}$)
on $(a_0,a_1)$ belonging to the exponent $0$ at $s=a_0$ and uniquely determined by the condition $\P{1}(a_0)=1$.
We write
\[ \P{1}(s_1)=(a_1-s_1)^{p_1/2} \PP{1}(s_1),\quad s_1\in(a_0,a_1),\]
where $\PP{1}(s_1)$ is analytic on $[a_0,a_1)$.
Then using the functions $\chi_j$ from Section 2 we define
\begin{equation}\label{5:I}
\I{1}(\r):=(\|\r\|^2+1)^{-1/2}\prod_{j=1}^3 (\chi_j(\r))^{p_j} \PP{1}(s_1)\prod_{i=2}^3 \EE{1}{i}(s_i) \quad \text{if $s_1\ne a_1$} .
\end{equation}
The condition $s_1\ne a_1$ is equivalent to $\r\in \R^3\setminus(K_1\cup M_1)$; see Figure \ref{2:fig1}.

Similarly, let $\Q{1}$ be the solution to \eqref{2:Fuchs} (with $\lambda_j=\lambda^{(1)}_{j,\n,\p}$)
on $(a_0,a_1)$ belonging to the exponent $\frac12$ at $s=a_0$ and uniquely determined by the condition $\lim_{s_1\to a_0^+} \omega(s_1)\frac{d}{ds_1}\Q{1}(s_1)=1$.
We write
\[ \Q{1}(s_1)=(s_1-a_0)^{1/2}(a_1-s_1)^{p_1/2}\QQ{1}(s_1),\quad s_1\in(a_0,a_1),\]
where $\QQ{1}(s_1)$ is analytic on $[a_0,a_1)$.
Then we define
\begin{equation}\label{5:J}
\J{1}(\r):=(\|\r\|^2+1)^{-1/2}\chi_0(\r)\prod_{j=1}^3 (\chi_j(\r))^{p_j} \QQ{1}(s_1)\prod_{i=2}^3 \EE{1}{i}(s_i) \quad \text{if $s_1\ne a_1$} .
\end{equation}

\begin{lemma}\label{5:l1}
The functions $\I{1}$ and $\J{1}$ are harmonic on $\R^3\setminus(K_1\cup M_1)$. They have the symmetries
\begin{eqnarray}
 \I{1}(\sigma_0(\r))&=&\|\r\| \I{1}(\r), \label{5:sym1}\\
 \I{1}(\sigma_j(\r))&=&(-1)^{p_j} \I{1}(\r),\quad j=1,2,3, \label{5:sym2}\\
 \J{1}(\sigma_0(\r))&=&-\|\r\| \J{1}(\r), \label{5:sym3} \\
 \J{1}(\sigma_j(\r))&=&(-1)^{p_j} \J{1}(\r) ,\quad j=1,2,3. \label{5:sym4}
 \end{eqnarray}
\end{lemma}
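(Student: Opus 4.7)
The plan is to follow the blueprint of the proof of Theorem \ref{3:t1}, substituting the two ODE solutions $\P{1}$ and $\Q{1}$ for the Frobenius eigenfunction $\E{1}{1}$ used there. The first step is to check that $\I{1}$ and $\J{1}$, as given by \eqref{5:I} and \eqref{5:J}, are continuous on $\R^3\setminus(K_1\cup M_1)$. This is immediate from Lemma \ref{2:l1} together with the analyticity of the reduced factors $\PP{1}$ and $\QQ{1}$ on $[a_0,a_1)$ and of $\EE{1}{i}$ on $[a_{i-1},a_i]$ for $i=2,3$; the exclusion $s_1\ne a_1$ is precisely $\r\notin K_1\cup M_1$. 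The same formulas, combined with the analyticity statements in Lemma \ref{2:l1} for the $\chi_j$ and the $s_i$, show that $\I{1}$ and $\J{1}$ are real-analytic on $U:=\R^3\setminus(K_1\cup M_1\cup A_1\cup A_2)$.

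Next I would establish harmonicity on the fundamental region $R$ by separation of variables. Since all $\chi_j$ are nonnegative on $R$, formulas \eqref{5:I} and \eqref{5:J} collapse there to
\begin{eqnarray*}
\I{1}(\r) &=& (\|\r\|^2+1)^{-1/2}\,\P{1}(s_1)\,\E{1}{2}(s_2)\,\E{1}{3}(s_3),\\
\J{1}(\r) &=& (\|\r\|^2+1)^{-1/2}\,\Q{1}(s_1)\,\E{1}{2}(s_2)\,\E{1}{3}(s_3),
\end{eqnarray*}
which are of the form \eqref{2:sepsol} since $\P{1}$ and $\Q{1}$ both solve \eqref{2:Fuchs} for the eigenvalues $\lambda^{(1)}_{j,\n,\p}$. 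Hence both functions are harmonic on $R$. The Laplacians $\Delta\I{1}$ and $\Delta\J{1}$ are then real-analytic on $U$ and vanish on the open subset $R\subset U$, so by the identity theorem they vanish throughout the connected set $U$. Finally, $A_1\cup A_2$ is a union of one-dimensional curves (polar sets in $\R^3$), and since $\I{1},\J{1}$ are continuous across them, \cite[Cor.\ 5.2.3]{AG} extends the harmonic functions to all of $\R^3\setminus(K_1\cup M_1)$.

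The symmetries \eqref{5:sym1}--\eqref{5:sym4} I would read directly off \eqref{5:I} and \eqref{5:J}: \eqref{2:chisigma} records the sign of each $\chi_j\circ\sigma_k$, the coordinates $s_1,s_2,s_3$ are invariant under every $\sigma_k$, and $(\|\sigma_0(\r)\|^2+1)^{-1/2}=\|\r\|(\|\r\|^2+1)^{-1/2}$. In \eqref{5:I} the product $\prod_{j=1}^3(\chi_j)^{p_j}$ picks up $(-1)^{p_k}$ under $\sigma_k$ for $k=1,2,3$ and no sign under $\sigma_0$, producing \eqref{5:sym1} and \eqref{5:sym2}. The additional factor $\chi_0$ in \eqref{5:J} contributes $-1$ under $\sigma_0$ and is invariant under the other reflections, yielding \eqref{5:sym3} and \eqref{5:sym4}.

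The step I expect to be the main subtlety is the analytic-continuation argument extending harmonicity across all of $U$: strictly it requires $U$ to be connected, which is geometrically plausible (the two-dimensional piece $K_1\cup M_1$ lies in the plane $x=0$ and the curves $A_1\cup A_2$ are one-dimensional, so neither can disconnect $\R^3$), but the removal of these pieces is worth checking explicitly. A cleaner alternative in the spirit of Theorem \ref{3:t1} is to establish the symmetries first and then propagate harmonicity from $R$ to each of the fifteen reflected and Kelvin-transformed copies of $R$, invoking invariance of the Laplacian under reflections and the Kelvin transform (the latter using \eqref{5:sym1} and \eqref{5:sym3}), before finally removing $A_1\cup A_2$ as polar sets.
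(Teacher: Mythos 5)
Your proposal is correct and follows essentially the same route as the paper: continuity and analyticity of the compositions via Lemma \ref{2:l1}, harmonicity obtained from the separated form \eqref{2:sepsol} and propagated by analyticity, removal of the exceptional curves as polar sets via \cite[Cor.\ 5.2.3]{AG} exactly as in Theorem \ref{3:t1}, and the symmetries read off from \eqref{2:chisigma} together with the factor $(\|\sigma_0(\r)\|^2+1)^{-1/2}=\|\r\|(\|\r\|^2+1)^{-1/2}$. The only cosmetic difference is that you also treat $A_1$ as a singular set to be removed, but $A_1=\{(0,y,z):g_1=0\}$ is already contained in $K_1\cup M_1$ (it satisfies $g_1\ge 0$ and misses the unit circle), so only $A_2$ actually requires the removable-singularity step, which also disposes of your connectedness worry since the paper's argument only needs each component of $\R^3\setminus(K_1\cup M_1\cup A_2)$ to meet one of the reflected copies of $R$.
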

\begin{proof}
By definition \eqref{5:I}, $\I{1}$ is a composition of continuous functions provided $s_1\ne a_1$, that is, $\I{1}$ is continuous on $\R^3\setminus(K_1\cup M_1)$.
$\I{1}$ is also a composition of analytic functions provided $s_1\ne a_1$ and $s_2\ne s_3$, that is, $\I{1}$ is analytic on $\R^3\setminus (K_1\cup M_1\cup A_2)$. Thus it is also harmonic
on $\R^3\setminus (K_1\cup M_1\cup A_2)$. By the same argument as in the proof of Theorem \ref{3:t1}, $A_2$ is a removable singularity of $\I{1}$. Thus
$\I{1}$ is harmonic on $\R^3\setminus (K_1\cup M_1)$.
The proof that $\J{1}$ is harmonic on $\R^3\setminus (K_1\cup M_1)$ is analogous.
The symmetry properties  follow from \eqref{5:I}, \eqref{5:J} and Lemma \ref{2:l1}.
\end{proof}

Since $\P{1},\Q{1}$ form a fundamental system of solutions to \eqref{2:Fuchs} (with $\lambda_j=\lambda^{(1)}_{j,\n,\p}$) on $(a_0,a_1)$, there are (nonzero) scalars $\a{1}$, $\b{1}$
such that
\[ \E{1}{1}=\a{1}\P{1}+\b{1} \Q{1} .\]
This leads us to the global definition of internal 5-cyclidic harmonics of the first kind
\begin{equation}\label{5:G}
\G{1}:=\a{1}\I{1}+\b{1}\J{1}
\end{equation}
which is consistent with \eqref{5:G0}.
We also note that, if $\|\r\|<1$ and $\r\not\in K_1$,  then \eqref{5:I}, \eqref{5:J}, \eqref{5:G} imply that
\begin{equation}\label{5:G1}
 \G{1}(\r)=(\|\r\|^2+1)^{-1/2}\prod_{j=1}^3 (\chi_j(\r))^{p_j} \prod_{i=1}^3 \EE{1}{i}(s_i) .
\end{equation}

\begin{thm}\label{5:t1}
Let $\n\in\N_0^2$ and $\p=(p_1,p_2,p_3)\in\{0,1\}^3$. Then $\G{1}$ extends continuously to a harmonic function on $\R^3\setminus M_1$.
Moreover,
\begin{equation}\label{5:sym5}
 \G{1}(\sigma_j(\r))=(-1)^{p_j}  \G{1}(\r)\quad\text{for $j=1,2,3$} .
\end{equation}
\end{thm}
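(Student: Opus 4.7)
The plan is to combine Lemma~\ref{5:l1} with the global identity \eqref{5:G1} inside the unit ball to extend $\G{1}$ across the interior sheet $K_1$, leaving $M_1$ as the only irremovable surface singularity. First, Lemma~\ref{5:l1} together with the definition \eqref{5:G} immediately give that $\G{1}$ is harmonic on $\R^3\setminus(K_1\cup M_1)$, and the symmetries \eqref{5:sym5} are inherited from \eqref{5:sym2} and \eqref{5:sym4} by linearity.

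I would then derive \eqref{5:G1} by writing $\E{1}{1}=\a{1}\P{1}+\b{1}\Q{1}$ and peeling off the common Frobenius factor $(a_1-s_1)^{p_1/2}$ to get $\EE{1}{1}(s_1)=\a{1}\PP{1}(s_1)+\b{1}(s_1-a_0)^{1/2}\QQ{1}(s_1)$ on $(a_0,a_1)$. Inside the unit ball one has $\chi_0(\r)=+(s_1-a_0)^{1/2}$, so substituting into \eqref{5:I}, \eqref{5:J} and combining through \eqref{5:G} collapses the $\PP{1}$ and $\QQ{1}$ contributions into a single $\EE{1}{1}(s_1)$ factor and produces \eqref{5:G1} on $\{\|\r\|<1\}\setminus K_1$. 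Since $\EE{1}{1}$ is analytic through the right endpoint $a_1$, $\EE{1}{2}$ is analytic on $[a_1,a_2]$, and each $\chi_j$ is continuous on $\R^3$ by Lemma~\ref{2:l1}, the right-hand side of \eqref{5:G1} gives a continuous extension of $\G{1}$ to all of $\{\|\r\|<1\}$.

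The main obstacle is upgrading this continuous extension to a harmonic one at each $\r_0\in K_1$. At a generic point $\r_0\in K_1\setminus(A_1\cup A_2)$, the analyticity of $s_1$ near $\r_0$, the $\sigma_1$-invariance $s_1\circ\sigma_1=s_1$, and the extremum condition $s_1\le a_1=s_1(\r_0)$ force a factorization $a_1-s_1=x^2\,\tilde c(\r)$ with $\tilde c$ real-analytic and positive at $\r_0$. Consequently $\chi_1(\r)=((s_2-a_1)\tilde c(\r))^{1/2}\,x$ is real-analytic in a neighborhood of $\r_0$, as is $\EE{1}{1}(s_1)$ since $\EE{1}{1}$ is analytic at $a_1$; the remaining factors on the right-hand side of \eqref{5:G1} are already real-analytic at $\r_0$. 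Thus $\G{1}$ is real-analytic and in particular harmonic near $\r_0$. The residual low-dimensional exceptional sets $A_1$ and $A_2$ (together with their intersections with $K_1$) are then handled exactly as in the proof of Theorem~\ref{3:t1}: either by invoking the polar-set removability \cite[Cor.~5.2.3]{AG} applied to the bounded, continuously extended $\G{1}$, or by the symmetric-function argument from \cite[Lemma~6.1]{CohlVolkcyclide1} using that $s_1$ and $s_2$ enter $\EE{1}{1}(s_1)\EE{1}{2}(s_2)$ symmetrically across $A_1$. Combining these steps with the first paragraph yields harmonicity on all of $\R^3\setminus M_1$, completing the proof.
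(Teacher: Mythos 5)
Your proposal is correct and follows essentially the same route as the paper's proof: harmonicity off $K_1\cup M_1$ via Lemma \ref{5:l1}, the identity \eqref{5:G1} to obtain a continuous extension into the unit ball that is harmonic off $A_1\cup A_2$, removability of those polar curves, and the symmetries inherited from \eqref{5:sym2} and \eqref{5:sym4}. The additional details you supply --- the explicit derivation of \eqref{5:G1} from $\E{1}{1}=\a{1}\P{1}+\b{1}\Q{1}$ and the factorization $a_1-s_1=x^2\,\tilde c(\r)$ near generic points of $K_1$ (which is already contained in Lemma \ref{2:l1} via the analyticity of $\chi_1$ off $A_2$) --- merely fill in steps the paper leaves implicit.
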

\begin{proof}
By Lemma \ref{5:l1}, $\G{1}$ is harmonic on $\R^3\setminus(K_1\cup M_1)$.
If $\|\r\|<1$ we have $s_1\ne a_0$. Therefore, the right-hand side of \eqref{5:G1} is continuous on the ball $B_1(\0)$ and harmonic on $B_1(\0)\setminus (A_1\cup A_2)$.
Thus it is harmonic on $B_1(\0)$ which proves the first part of the statement of the theorem.
The symmetries follow from \eqref{5:sym2}, \eqref{5:sym4}.
\end{proof}

It will be useful to introduce another solution to \eqref{2:Fuchs} by
\begin{equation}\label{5:F}
\F{1}(s_1):= \c{1}\left(\a{1} \P{1}(s_1)-\b{1} \Q{1}(s_1)\right),\quad s_1\in (a_0,a_1) .
\end{equation}
We determine $\c{1}$ from the  Wronskian
\begin{equation}\label{5:Wronskian}
\omega(s_1)\left(\E{1}{1}(s_1)\frac{d}{ds_1}\F{1}(s_1)-\F{1}(s_1)\frac{d}{ds_1}\E{1}{1}(s_1)\right) =1
\end{equation}
which is equivalent to
\[ \c{1}=\frac{-1}{2\a{1}\b{1}} .\]
We define external 5-cyclidic harmonics of the first kind by
\begin{equation}\label{5:H}
\H{1}(\r):=\c{1}\|\r\|^{-1} \G{1}(\sigma_0(\r)) \quad \text{for $\r\in\R^3\setminus K_1$}.
\end{equation}
The reason to include the factor $\c{1}$ is that we aim for a simple form of the expansion formula \eqref{6:expansion}.
In particular, we have
\begin{equation}\label{5:H1}
 \H{1}(\r)=(\|\r\|^2+1)^{-1/2}\F{1}(s_1)\E{1}{2}(s_2)\E{1}{3}(s_3)\quad\text{for $\r\in R$}.
\end{equation}
We notice an important difference between 5-cyclidic harmonics of the first and second kind (considered in Section 3).
The external 5-cyclidic harmonics of the first kind are simply the Kelvin transformations of the internal 5-cyclidic harmonics of the first kind up to a constant factor.
There is no such simple relationship between internal and external 5-cyclidic harmonics of the second kind.

\begin{thm}\label{5:t2}
Let $\n\in\N_0^2$ and $\p=(p_1,p_2,p_3)\in\{0,1\}^3$. Then $\H{1}$
is harmonic on $\R^3\setminus K_1$.
The functions $\H{1}$ share the symmetries \eqref{5:sym5} with $\G{1}$.
Moreover,
\begin{equation}\label{5:est1}
\H{1}(\r) =O(\|\r\|^{-1})\quad\text{as $\|\r\|\to\infty$},
\end{equation}
and
\begin{equation}\label{5:est2}
\|\nabla \H{1}(\r)\| =O(\|\r\|^{-2})\quad\text{as $\|\r\|\to\infty$} .
\end{equation}
\end{thm}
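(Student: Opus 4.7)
The plan is to derive every assertion directly from the definition \eqref{5:H}, which exhibits $\H{1}$ (up to the constant $\c{1}$) as the Kelvin transform of the internal harmonic $\G{1}$ from Theorem~\ref{5:t1}. The whole proof is thus a transcription of properties of $\G{1}$ through the inversion $\sigma_0$, and is in fact simpler than the analogous Theorem~\ref{3:t2} because the external and internal harmonics of the first kind are related by a Kelvin transform in a way that the harmonics of the second kind are not.

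For harmonicity, I would invoke the Kelvin-transform principle recalled after Theorem~\ref{3:t1}: if $U$ is harmonic on an open set $D\subset\R^3$, then $\r\mapsto\|\r\|^{-1}U(\sigma_0(\r))$ is harmonic on $\{\r\neq\0:\sigma_0(\r)\in D\}$. Applying this with $U=\G{1}$ and $D=\R^3\setminus M_1$, the task reduces to verifying the set identity $\{\r\neq\0:\sigma_0(\r)\notin M_1\}=\R^3\setminus K_1$. Since $s_1$ is $\sigma_0$-invariant (by the remark after \eqref{2:reflections}) and $\sigma_0$ exchanges the open unit ball with its exterior, $\sigma_0$ sends $M_1$ bijectively to $K_1\setminus\{\0\}$; combined with $\0\in K_1$ (the origin satisfies $s_1=a_1$, directly from \eqref{2:surface}) and $\0\notin M_1$, the identity follows. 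The symmetries \eqref{5:sym5} for $\H{1}$ are then automatic: each reflection $\sigma_j$, $j=1,2,3$, commutes with $\sigma_0$ and preserves $\|\cdot\|$, so \eqref{5:sym5} for $\G{1}$ transfers to $\H{1}$ with the same sign.

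For the asymptotic estimates, the key point is that $\0\notin M_1$, so by Theorem~\ref{5:t1} the function $\G{1}$ is continuous and hence bounded on a neighborhood of $\0$. As $\|\r\|\to\infty$ one has $\sigma_0(\r)\to\0$, so $\G{1}(\sigma_0(\r))=O(1)$ and \eqref{5:est1} follows. For \eqref{5:est2} I would bypass any chain-rule computation and invoke the standard interior gradient estimate for harmonic functions: since $K_1\subset\overline{B_1(\0)}$ is bounded, the ball $B_{\|\r\|/2}(\r)$ lies in $\R^3\setminus K_1$ for all sufficiently large $\|\r\|$, and the bound $\|\nabla\H{1}(\r)\|\le (C/\|\r\|)\sup_{B_{\|\r\|/2}(\r)}|\H{1}|$ combined with \eqref{5:est1} yields \eqref{5:est2}. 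The only step requiring genuine care is the set-theoretic identification of the Kelvin-transform domain with $\R^3\setminus K_1$; the remainder is a direct consequence of Theorem~\ref{5:t1} and standard Kelvin-transform calculus.
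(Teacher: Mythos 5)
Your proposal is correct and follows essentially the same route as the paper: harmonicity and the symmetries \eqref{5:sym5} are read off from the Kelvin-transform definition \eqref{5:H} together with Theorem \ref{5:t1}, and the estimates come from the fact that the Kelvin transform of $\H{1}$ (namely $\c{1}\G{1}$) is regular at the origin because $\0\in K_1\subset\R^3\setminus M_1$. Your only deviations are welcome elaborations the paper leaves implicit -- the set identity $\{\r\neq\0:\sigma_0(\r)\notin M_1\}=\R^3\setminus K_1$ (using $\0\in K_1$) and the derivation of \eqref{5:est2} from \eqref{5:est1} via the interior gradient estimate rather than by differentiating the analytic Kelvin transform.
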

\begin{proof}
The proof of analyticity and symmetry follows directly from \eqref{5:H} and Theorem \ref{5:t1}.
Estimates \eqref{5:est1} and \eqref{5:est2} follow from the fact that the Kelvin transformation of $\H{1}$ is analytic at the origin.
\end{proof}

\section{Expansion of the reciprocal distance in 5-cyclidic harmonics of first kind}\label{expansion1}

For fixed $s\in(a_0,a_1)$ the coordinate surface \eqref{2:surface} consists of two closed surfaces of genus $0$. One lies inside the unit ball $B_1(\0)$ and the other one is
obtained from it by inversion $\sigma_0$.
We consider the region $D_1$ interior to the coordinate surface $s=d_1$ which lies in $B_1(\0)$:
\begin{equation}\label{6:ring1}
D_1:=\{\r\in \R^3: \|\r\|<1, s_1>d_1\}.
\end{equation}

\begin{thm}\label{6:t1}
Let $d_1\in(a_0,a_1)$, $\n\in\N_0^2$, $\p\in\{0,1\}^3$. Then
\begin{equation}\label{6:integralformula}
\H{1}(\r')=\frac{1}{4\pi\omega(d_1)\{\E{1}{1}(d_1)\}^2}
\int_{\partial D_1} \frac{\G{1}(\r)}{h_1(\r)\|\r-\r'\|}\, dS(\r)
\end{equation}
for all $\r'\in\R^3\setminus\bar D_1$.
The scale factor $h_1$ is given by
\begin{equation}\label{6:h2}
 16 \{h_1(\r)\}^2=\frac{(\|\r\|^2-1)^2}{(d_1-a_0)^2}+ \frac{4x^2}{(d_1-a_1)^2}+\frac{4y^2}{(d_1-a_2)^2}+ \frac{4z^2}{(d_1-a_3)^2}.
\end{equation}
\end{thm}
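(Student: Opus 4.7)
The plan is to mirror the proof of Theorem~\ref{4:t1} essentially verbatim, replacing $D_2$ by $D_1$ and every second-kind object by its first-kind analog. I first verify two harmonicity prerequisites. (i) $\G{1}$ is harmonic on an open set containing $\bar D_1$: by Theorem~\ref{5:t1} it is harmonic on $\R^3\setminus M_1$, and since $\bar D_1\subset B_1(\0)$ (strictly, because $\|\r\|=1$ forces $s_1=a_0<d_1$) while $M_1\subset\{\|\r\|>1\}$, these sets are disjoint. (ii) $\H{1}$ is harmonic on an open set containing $\R^3\setminus D_1$ with the decay \eqref{5:est1}--\eqref{5:est2}: by Theorem~\ref{5:t2} it is harmonic on $\R^3\setminus K_1$, and $K_1\subset D_1$ because points of $K_1$ have $s_1=a_1>d_1$ while sitting inside the unit ball.

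With these in hand, I apply Green's formula \eqref{4:Greenformula} twice. First with $D=D_1$, $u=\G{1}$, and $v(\r)=(4\pi\|\r-\r'\|)^{-1}$ for $\r'\in\R^3\setminus\bar D_1$; both are harmonic on a neighbourhood of $\bar D_1$, yielding the analog of \eqref{4:eq1}. Second with $D=B_R(\0)\setminus\bar D_1\setminus B_\epsilon(\r')$, $u=\H{1}$, and the same $v$; the standard $\epsilon\to 0$ limit extracts $\H{1}(\r')$ from the fundamental-solution singularity, while $R\to\infty$ kills the outer-sphere contribution via \eqref{5:est1}--\eqref{5:est2}, giving the analog of \eqref{4:eq3}. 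Next, I establish the boundary identity $\F{1}(d_1)\,\G{1}(\r)=\E{1}{1}(d_1)\,\H{1}(\r)$ on $\partial D_1$: on $\partial D_1\cap R$ (where $s_1=d_1$) the product-form definitions \eqref{5:G0} and \eqref{5:H1} both factor as $(\|\r\|^2+1)^{-1/2}\F{1}(d_1)\E{1}{1}(d_1)\E{1}{2}(s_2)\E{1}{3}(s_3)$, making the identity immediate, and the shared reflection symmetries \eqref{5:sym5} propagate it to the whole of $\partial D_1$ (the inversion $\sigma_0$ plays no role since $\partial D_1$ lies strictly inside the unit ball).

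Multiplying the first identity by $\F{1}(d_1)$ and the second by $\E{1}{1}(d_1)$ and adding, the $\partial_\nu v$ terms cancel by the boundary identity, leaving the analog of \eqref{4:eq4}. The surviving coefficient $\E{1}{1}(d_1)\,\partial_\nu\H{1}-\F{1}(d_1)\,\partial_\nu\G{1}$, evaluated on $\partial D_1\cap R$ via $\partial/\partial\nu=h_1^{-1}\,\partial/\partial s_1$ together with the Wronskian normalisation \eqref{5:Wronskian}, collapses exactly as in \eqref{4:eq5} to $\G{1}/(h_1\,\omega(d_1)\,\E{1}{1}(d_1))$, extended to the whole of $\partial D_1$ by the same symmetries. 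Dividing through by $\E{1}{1}(d_1)$ delivers \eqref{6:integralformula}. The only step demanding genuine attention is the set-theoretic inclusion $K_1\subset D_1$ — needed to justify Green's formula on the unbounded exterior — together with the vanishing of the outer-sphere integral; both are straightforward given the template of Section~\ref{expansion2}, so no new technical difficulty arises.
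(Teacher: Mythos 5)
Your overall strategy is exactly the paper's: the published proof of Theorem \ref{6:t1} consists of the single remark that it is ``similar to the proof of Theorem \ref{4:t1}'', using \eqref{5:G0}, \eqref{5:H1} and the Wronskian \eqref{5:Wronskian}; your two applications of Green's formula, the boundary identity $\F{1}(d_1)\,\G{1}=\E{1}{1}(d_1)\,\H{1}$ on $\partial D_1$, and the inclusions $\bar D_1\subset B_1(\0)$ and $K_1\subset D_1$ are all correct and are the right things to check. There is, however, one genuine gap, located precisely at the step you say ``collapses exactly as in \eqref{4:eq5}''. In Section \ref{expansion2} the domain is $D_2=\{s_2<d_2\}$, so the outward normal on $\partial D_2$ points in the direction of \emph{increasing} $s_2$ and $\partial/\partial\nu=h_2^{-1}\,\partial/\partial s_2$. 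Here the domain is $D_1=\{\|\r\|<1,\ s_1>d_1\}$: crossing $\partial D_1$ outward, $s_1$ \emph{decreases} (for $a_j=j$ one has $s_1=\bigl((1-x^2)/(1+x^2)\bigr)^2$ on the positive $x$-axis, which decreases from $a_1$ at $K_1$ to $a_0$ at the unit sphere). Hence the correct relation on $\partial D_1$ is $\partial/\partial\nu=-h_1^{-1}\,\partial/\partial s_1$, not the one you wrote.

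Carrying this sign through, and using that \eqref{5:Wronskian} fixes $\E{1}{1}\frac{d}{ds_1}\F{1}-\F{1}\frac{d}{ds_1}\E{1}{1}=+1/\omega$ on $(a_0,a_1)$, the analog of \eqref{4:eq5} becomes
\begin{equation*}
\left(\E{1}{1}(d_1)\frac{\partial \H{1}}{\partial\nu}-\F{1}(d_1)\frac{\partial \G{1}}{\partial\nu}\right)(\r)
=-\,\frac{\G{1}(\r)}{h_1(\r)\,\omega(d_1)\,\E{1}{1}(d_1)},
\end{equation*}
so the computation as you describe it delivers \eqref{6:integralformula} with an overall minus sign. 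This is the one place where the first-kind case is \emph{not} a verbatim copy of the second-kind template, and it must be addressed explicitly: either the extra sign is absorbed by orienting the Wronskian the other way (i.e.\ working with $\omega(E F'-F E')=-1$, equivalently $\c{1}=+1/(2\a{1}\b{1})$), or one must check which sign is the one consistent with the coefficient formula \cite[(71),(73)]{CohlVolkcyclide1} invoked in Theorem \ref{6:t2}. Your closing remark that ``no new technical difficulty arises'' is exactly where the difficulty hides; the same orientation issue recurs for the third kind, where $D_3=\{z>0,\ s_3<d_3\}$ again has the normal pointing in the direction of increasing $s_3$, so Sections \ref{harmonics3}--\ref{expansion3} do follow the Section \ref{expansion2} template verbatim while Section \ref{expansion1} does not.
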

\begin{proof}
The proof is similar to the proof of Theorem \ref{4:t1}. We use \eqref{5:G0}, \eqref{5:H1} and the Wronskian \eqref{5:Wronskian}.
\end{proof}

We obtain the expansion of the reciprocal distance in 5-cyclidic harmonics of first kind.

\begin{thm}\label{6:t2}
Let $\r,\r'\in\R^3$ with 5-cyclidic coordinates $s_1, s_1'$, respectively.
If either (a) $\|\r\|, \|\r'\|\le1$, $s_1>s_1'$, or (b) $\|\r\|<1<\|\r'\|$, or (c)   $\|\r\|,\|\r'\|\ge1$, $s_1<s_1'$, then
\begin{equation}\label{6:expansion}
\frac{1}{\|\r-\r'\|}=2\pi\sum_{\n\in\N_0^2} \sum_{\p\in\{0,1\}^3}  \G{1}(\r)\H{1}(\r').
\end{equation}
\end{thm}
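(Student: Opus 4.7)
The plan is to mimic the proof of Theorem \ref{4:t2} for cases (a) and (b), and to reduce case (c) to case (a) via the inversion $\sigma_0$. In cases (a) and (b) I would select $d_1\in(a_0,a_1)$ so that $\r\in D_1$ and $\r'\in\R^3\setminus\bar D_1$. In case (a) take $d_1\in(s_1',s_1)$; note that $s_1>s_1'\ge a_0$ forces $\|\r\|<1$, putting $\r$ in $D_1$, while $s_1'<d_1$ combined with $\|\r'\|\le 1$ places $\r'$ outside $\bar D_1$. In case (b) take any $d_1\in(a_0,s_1)$; since $\bar D_1\subset B_1(\0)$, the hypothesis $\|\r'\|>1$ keeps $\r'$ outside $\bar D_1$. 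In both subcases the map $\q\mapsto\|\q-\r'\|^{-1}$ is harmonic on an open neighborhood of $\bar D_1$, and the completeness property of internal 5-cyclidic harmonics of the first kind established in \cite{CohlVolkcyclide1} (the first-kind analog of (95), (97) used in the proof of Theorem \ref{4:t2}) produces an expansion in the basis $\{\G{1}\}$ whose coefficients are surface integrals of $\G{1}/h_1$ times $\|\q-\r'\|^{-1}$ over $\partial D_1$. Theorem \ref{6:t1} then identifies each such coefficient with $\H{1}(\r')$, yielding \eqref{6:expansion} with the correct prefactor $2\pi$.

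For case (c), with $\|\r\|,\|\r'\|>1$ (the possibility $\|\r'\|=1$ is excluded by $s_1<s_1'$, and $\|\r\|=1$ is an edge case handled by continuity), I would exploit the conformal identity $\|\r-\r'\|=\|\r\|\,\|\r'\|\,\|\sigma_0(\r)-\sigma_0(\r')\|$ together with the $\sigma_0$-invariance of the 5-cyclidic coordinates. The inverted points $\sigma_0(\r),\sigma_0(\r')$ lie in the open unit ball and carry the same $s_1$-values as $\r,\r'$, so the pair $(\tilde{\r},\tilde{\r}'):=(\sigma_0(\r'),\sigma_0(\r))$ satisfies the hypotheses of case (a): $\|\tilde{\r}\|,\|\tilde{\r}'\|<1$ and $s_1(\tilde{\r})=s_1'>s_1=s_1(\tilde{\r}')$. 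Applying case (a) to this pair expands $\|\sigma_0(\r)-\sigma_0(\r')\|^{-1}$ as $2\pi\sum\G{1}(\tilde{\r})\H{1}(\tilde{\r}')$. The Kelvin-transformation relation \eqref{5:H} rewrites the summands via $\G{1}(\sigma_0(\r'))=\c{1}^{-1}\|\r'\|\,\H{1}(\r')$ and $\H{1}(\sigma_0(\r))=\c{1}\|\r\|\,\G{1}(\r)$: the $\c{1}$-factors cancel and the product $\|\r\|\|\r'\|$ absorbs the conformal factor in the reciprocal-distance identity, giving \eqref{6:expansion} in case (c).

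The main obstacle is the bookkeeping in case (c): one must confirm that the Kelvin conversion and the conformal factor combine to leave exactly the prefactor $2\pi$ with no residual sign or norm factor, and one must justify the termwise rearrangement of the double series under $\sigma_0$. Boundary configurations such as $\|\r\|=1$ (compatible with case (c)) and the closure of the $s_1$-inequality in case (a) would be handled by a limiting argument from the strict interior, since both sides of \eqref{6:expansion} are continuous on the relevant harmonic domain. A subordinate point is to extract from \cite{CohlVolkcyclide1} the correct normalization of the first-kind completeness expansion, whose prefactor $2\pi$ (in place of the $\pi$ appearing in Theorem \ref{4:t2}) reflects the different basis structure of the first-kind harmonics.
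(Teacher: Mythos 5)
Your proposal follows the paper's proof essentially verbatim: the same choice of $d_1$ in cases (a) and (b), the same appeal to the completeness expansion from \cite{CohlVolkcyclide1} combined with Theorem \ref{6:t1}, and the same reduction of case (c) to case (a) via the pair $(\sigma_0(\r'),\sigma_0(\r))$, the Kelvin relation \eqref{5:H}, and the identity $\|\r-\r'\|=\|\r\|\,\|\r'\|\,\|\sigma_0(\r)-\sigma_0(\r')\|$. The case (c) bookkeeping works out exactly as you describe (the $\c{1}$ factors cancel and $\|\r\|\|\r'\|$ absorbs the conformal factor), and no extra limiting argument is actually needed for the boundary configurations, since case (a) as stated already admits the non-strict inequalities that arise after inversion.
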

\begin{proof}
Suppose (a) or (b) holds. Pick $d_1$ such that $s_1'<d_1<s_1$ if (a) holds, or such that $a_0<d_1<s_1$ if (b) holds. Then consider the domain $D_1$ defined in \eqref{6:ring1}.
The function $f(\q):=\|\q-\r'\|^{-1}$ is harmonic on an open set containing $\bar D_1$.
Therefore, by \cite[(71),(73)]{CohlVolkcyclide1}, we have
\begin{equation}\label{6:expansion2}
\frac{1}{\|\r-\r'\|}= \sum_{\n\in\N_0^2} \sum_{\p\in\{0,1\}^3} \d{1}\G{1}(\r),
\end{equation}
where
\[ \d{1}=\frac{1}{2\omega(d_1)\{\E{1}{1}(d_1)\}^2} \int_{\partial D_1} \frac{\G{1}(\q)}{h_1(\q)\|\q-\r'\|}\,dS(\q).\]
Using Theorem \ref{6:t1}, we obtain \eqref{6:expansion}.

Now suppose (c) holds. Then the points $\sigma_0(\r')$, $\sigma_0(\r)$ in place of $\r,\r'$ satisfy (a), so, by what we already proved,
\[ \frac{1}{\|\sigma_0(\r)-\sigma_0(\r')\|}=2\pi\sum_{\n\in\N_0^2} \sum_{\p\in\{0,1\}^3}  \G{1}(\sigma_0(\r'))\H{1}(\sigma_0(\r)). \]
This gives \eqref{6:expansion} by using \eqref{5:H} and observing that
\[ \|\r-\r'\|=\|\r\|\|\r'\|\|\sigma_0(\r)-\sigma_0(\r')\| .\]
\end{proof}

\section{5-cyclidic harmonics of the third kind}\label{harmonics3}

The 5-cyclidic harmonics of the third kind are treated analogously to the harmonics of the first kind.
Therefore, we will omit all proofs in the following two sections.

In \cite[Section IX]{CohlVolkcyclide1} we introduced special solutions $w_i(s_i)=\E{3}{i}(s_i)$ to equation \eqref{2:Fuchs} for eigenvalues $\lambda_j=\lambda^{(3)}_{j,\n,\p}$, $j=1,2$, for every $\n\in\N_0^2$, $\p=(p_0,p_1,p_2)\in\{0,1\}^3$.
These functions have the form
\begin{eqnarray*}
 \E{3}{i}(s_i)&=&(s_i-a_{i-1})^{p_{i-1}/2}(a_i-s_i)^{p_i/2} \EE{3}{i}(s_i),\quad s_i\in(a_{i-1},a_i),\ i=1,2,\\
 \E{3}{3}(s_3)&=&(s_3-a_2)^{p_2/2} \EE{3}{3}(s_3), \quad s_3\in(a_2,a_3),
\end{eqnarray*}
where $\EE{3}{i}$ is analytic on $[a_{i-1},a_i]$ for $i=1,2$ while $\EE{3}{3}$ is analytic on $[a_2,a_3)$.
As in \cite[Section X]{CohlVolkcyclide1} we define the internal 5-cyclidic harmonic of the third kind by
\begin{equation}\label{7:G0}
\G{3}(\r):=(\|\r\|^2+1)^{-1/2}  \E{3}{1}(s_1)\E{3}{2}(s_2)\E{3}{3}(s_3), \quad \r\in R.
\end{equation}

Let $\P{3}(s_3)$ be the solution to \eqref{2:Fuchs} (with $\lambda_j=\lambda^{(3)}_{j,\n,\p}$)
on $(a_2,a_3)$ belonging to the exponent $0$ at $s=a_3$ and uniquely determined by the condition $\P{3}(a_3)=1$.
We write
\[ \P{3}(s_3)=(s_3-a_2)^{p_2/2} \PP{3}(s_3),\quad s_3\in(a_2,a_3),\]
where $\PP{3}(s_3)$ is analytic on $(a_2,a_3]$.
Then we define
\begin{equation}\label{7:I}
\I{3}(\r):=(\|\r\|^2+1)^{-1/2}\prod_{j=0}^2 (\chi_j(\r))^{p_j} \prod_{i=1}^2 \EE{3}{i}(s_i) \PP{3}(s_3)\quad \text{if $s_3\ne a_2$} .
\end{equation}
The condition $s_3\ne a_2$ is equivalent to $\r\in \R^3\setminus(K_2\cup M_2)$; see Figure \ref{2:fig2}.

Similarly, let $\Q{3}(s_3)$ be the solution to \eqref{2:Fuchs} (with $\lambda_j=\lambda^{(3)}_{j,\n,\p}$)
on $(a_2,a_3)$ belonging to the exponent $\frac12$ at $s=a_3$ and uniquely determined by the condition $\lim_{s_3\to a_3^-} \omega(s_3)\frac{d}{ds_3} \Q{3}(s_3)=1$.
We write
\[ \Q{3}(s_3)=(a_3-s_3)^{1/2}(s_3-a_2)^{p_2/2}\QQ{3}(s_3),\quad s_3\in(a_2,a_3),\]
where $\QQ{3}(s_3)$ is analytic on $(a_2,a_3]$.
Then we define
\begin{equation}\label{7:J}
\J{3}(\r):=(\|\r\|^2+1)^{-1/2}\chi_3(\r)\prod_{j=0}^2 (\chi_j(\r))^{p_j} \prod_{i=1}^2 \EE{3}{i}(s_i)\QQ{3}(s_3) \quad \text{if $s_3\ne a_2$} .
\end{equation}

\begin{lemma}\label{7:l1}
The functions $\I{3}$ and $\J{3}$ are harmonic on $\R^3\setminus(K_2\cup M_2)$. They have the symmetries
\begin{eqnarray}
 \I{3}(\sigma_0(\r))&=&(-1)^{p_0}\|\r\|\I{3}(\r),\label{7:sym1}\\
 \I{3}(\sigma_j(\r))&=&(-1)^{p_j} \I{3}(\r),\quad j=1,2, \label{7:sym2}\\
 \I{3}(\sigma_3(\r))&=&\I{3}(\r),\label{7:sym3}\\
 \J{3}(\sigma_0(\r))&=&(-1)^{p_0}\|\r\|\J{3}(\r),\label{7:sym4}\\
 \J{3}(\sigma_j(\r))&=&(-1)^{p_j} \J{3}(\r) ,\quad j=1,2, \label{7:sym5}\\
 \J{3}(\sigma_3(\r))&=-&\J{3}(\r).\label{7:sym6}
\end{eqnarray}
\end{lemma}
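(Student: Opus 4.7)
The plan is to mirror the proof of Lemma \ref{5:l1}, exchanging the roles of $s_1$ and $s_3$ (and correspondingly of the endpoints $a_0,a_1$ with $a_3,a_2$, and of $\chi_0$ with $\chi_3$). The argument splits into three pieces: (i) continuity on $\R^3\setminus(K_2\cup M_2)$, (ii) upgrading continuity to harmonicity by treating $A_1$ as a removable singularity, and (iii) verification of the six symmetry identities.

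For (i), the set $K_2\cup M_2$ is precisely where $s_3=a_2$; on its complement $\PP{3}(s_3)$ and $\QQ{3}(s_3)$ are continuous because $\PP{3}$ and $\QQ{3}$ are analytic on $(a_2,a_3]$. The factors $\chi_j$ are continuous on $\R^3$ by Lemma \ref{2:l1}, and $\EE{3}{1}(s_1), \EE{3}{2}(s_2)$ are continuous compositions on $\R^3$. For (ii), outside $K_2\cup M_2\cup A_1$ every factor in \eqref{7:I} and \eqref{7:J} is real-analytic: $s_1,s_2$ are analytic off $A_1$, and $A_2\subset K_2\cup M_2$ is already excluded. The separation-of-variables identity behind \eqref{2:sepsol} then delivers harmonicity on this open set. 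To dispose of $A_1$ I would follow the dichotomy used in the proof of Theorem \ref{3:t1}: either note that $A_1$ is a polar set and apply \cite[Cor.\ 5.2.3]{AG} to the continuous extension already built in (i); or give the direct argument from \cite[Lemma 6.1]{CohlVolkcyclide1}, writing the $s_1,s_2$-dependent factor as a function of the elementary symmetric functions $s_1+s_2$ and $s_1s_2$, which are analytic across $A_1$.

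For (iii), since each $s_i$ and hence $\EE{3}{i}, \PP{3}, \QQ{3}$ is invariant under every $\sigma_j$, the symmetries are governed entirely by the prefactor $(\|\r\|^2+1)^{-1/2}$ and the $\chi_j$ factors via \eqref{2:chisigma}. Under $\sigma_0$ the prefactor acquires a multiplicative $\|\r\|$ (because $\|\sigma_0(\r)\|=\|\r\|^{-1}$) and $\chi_0^{p_0}$ picks up the sign $(-1)^{p_0}$, producing \eqref{7:sym1} and \eqref{7:sym4}. Under $\sigma_j$ for $j=1,2$ only $\chi_j$ flips, giving \eqref{7:sym2} and \eqref{7:sym5}. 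Finally, under $\sigma_3$ no factor appearing in \eqref{7:I} is affected, so $\I{3}$ is invariant, while the explicit $\chi_3$ in \eqref{7:J} flips sign, yielding \eqref{7:sym3} and \eqref{7:sym6}. The only conceptually nontrivial step is the removability across $A_1$; everything else is bookkeeping, and since the analogous machinery has already been developed in Sections \ref{harmonics2} and \ref{harmonics1}, no genuinely new obstacle appears.
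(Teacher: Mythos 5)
Your argument is correct and is precisely the intended proof: the paper omits it (Section~\ref{harmonics3} declares all proofs there analogous to the first-kind case), and what you write is the faithful analogue of the proofs of Lemma~\ref{5:l1} and Theorem~\ref{3:t1}, including the key bookkeeping points that $A_2\subset K_2\cup M_2$ is already excluded so only $A_1$ requires the removability argument, and that the symmetries reduce to \eqref{2:chisigma} together with $\|\sigma_0(\r)\|=\|\r\|^{-1}$.
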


Since $\P{3},\Q{3}$ form a fundamental system of solutions to \eqref{2:Fuchs} (with $\lambda_j=\lambda^{(3)}_{j,\n,\p}$) on $(a_2,a_3)$, there are (nonzero) scalars $\a{3}$,
$\b{3}$ such that
\[ \E{3}{3}=\a{3} \P{3}+\b{3} \Q{3} .\]
This leads to the global definition of internal 5-cyclidic harmonics of the third kind
\begin{equation}\label{7:G}
\G{3}:=\a{3}\I{3}+\b{3}\J{3} .
\end{equation}
If $z>0$, we can write $\G{3}$ as follows
\begin{equation}\label{7:G1}
 \G{3}(\r)=(\|\r\|^2+1)^{-1/2}\prod_{j=0}^2 (\chi_j(\r))^{p_j} \prod_{i=1}^3 \EE{3}{i}(s_i) .
\end{equation}

\begin{thm}\label{7:t1}
Let $\n\in\N_0^2$ and $\p=(p_0,p_1,p_2)\in\{0,1\}^3$. Then $\G{3}$ extends continuously to a harmonic function on $\R^3\setminus M_2$.
Moreover
\begin{eqnarray}
  \G{3}(\sigma_0(\r))&=&(-1)^{p_0}\|\r\|\G{3}(\r),\label{7:sym7}\\
  \G{3}(\sigma_j(\r))&=&(-1)^{p_j} \G{3}(\r),\quad j=1,2. \label{7:sym8}
\end{eqnarray}
\end{thm}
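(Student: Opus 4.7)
The plan is to mirror the proof of Theorem~\ref{5:t1}, substituting the third-kind data constructed above. By Lemma~\ref{7:l1} both $\I{3}$ and $\J{3}$ are harmonic on $\R^3\setminus(K_2\cup M_2)$, so the combination $\G{3}=\a{3}\I{3}+\b{3}\J{3}$ in \eqref{7:G} is harmonic on that set as well. What remains is to construct a harmonic extension of $\G{3}$ across the singular surface $K_2$. For this I will use the alternative representation \eqref{7:G1}: on the open half-space $\{z>0\}$ one has $s_3\ne a_3$, so $s_3\in[a_2,a_3)$, which is precisely the domain on which $\EE{3}{3}$ is analytic. The remaining ingredients on the right-hand side of \eqref{7:G1}---the factor $(\|\r\|^2+1)^{-1/2}$, the auxiliary functions $\chi_0,\chi_1,\chi_2$ (continuous on $\R^3$ by Lemma~\ref{2:l1}), and $\EE{3}{1}(s_1),\EE{3}{2}(s_2)$ (each a composition of a globally continuous $s_i$ with a function analytic on $[a_{i-1},a_i]$)---are all continuous. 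Hence \eqref{7:G1} defines a continuous function on all of $\{z>0\}$.

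To upgrade continuity to harmonicity, note that the same right-hand side is a composition of real-analytic maps on $\{z>0\}\setminus(A_1\cup A_2)$, using Lemma~\ref{2:l1} and the analyticity of $s_1,s_2,s_3$ off $A_1$, $A_1\cup A_2$, and $A_2$, respectively. Thus \eqref{7:G1} is harmonic on $\{z>0\}\setminus(A_1\cup A_2)$. Since $A_1$ and $A_2$ are polar (each a finite union of real-analytic curves), \cite[Cor.~5.2.3]{AG} removes them as singularities of the bounded extension, so \eqref{7:G1} is harmonic on all of $\{z>0\}$. On the overlap $\{z>0\}\setminus K_2$ this expression coincides with $\a{3}\I{3}+\b{3}\J{3}$, because both reduce to \eqref{7:G0} on the connected open region $R$ and are real-analytic extensions of that common function. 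Hence $\G{3}$ is well-defined and harmonic on
\[
\bigl(\R^3\setminus(K_2\cup M_2)\bigr)\cup\{z>0\}=\R^3\setminus M_2,
\]
as claimed.

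The symmetries \eqref{7:sym7}, \eqref{7:sym8} follow immediately from \eqref{7:G} together with the identities \eqref{7:sym1}, \eqref{7:sym2}, \eqref{7:sym4}, \eqref{7:sym5} in Lemma~\ref{7:l1}: the transformation factors $(-1)^{p_0}\|\r\|$ (under $\sigma_0$) and $(-1)^{p_j}$ (under $\sigma_j$, $j=1,2$) agree for $\I{3}$ and $\J{3}$, hence pass through the linear combination. This yields the stated identities on $\R^3\setminus(K_2\cup M_2)$, and by continuity of $\G{3}$ they propagate to all of $\R^3\setminus M_2$. The main technical delicacy---and the place where the analogy with the first-kind proof has to be checked carefully---is the consistency of \eqref{7:G1} with \eqref{7:G0} on $R\cap\{z>0\}$; this reduces to the factorization
\[
\E{3}{3}(s_3)=(s_3-a_2)^{p_2/2}\bigl[\a{3}\PP{3}(s_3)+\b{3}(a_3-s_3)^{1/2}\QQ{3}(s_3)\bigr]=(s_3-a_2)^{p_2/2}\EE{3}{3}(s_3)
\]
combined with the identity $\chi_3(\r)=(a_3-s_3)^{1/2}$ valid on $\{z>0\}$.
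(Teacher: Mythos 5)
Your proof is correct and follows the paper's intended route: the paper omits the proof of this theorem, stating only that the third kind is treated analogously to the first kind, and your argument is precisely the analogue of the proof of Theorem~\ref{5:t1} (harmonicity off $K_2\cup M_2$ from Lemma~\ref{7:l1}, the representation \eqref{7:G1} to extend continuously and harmonically across $K_2$ on the half-space $z>0$ with the polar sets $A_1\cup A_2$ removed, and the symmetries inherited from \eqref{7:sym1}, \eqref{7:sym2}, \eqref{7:sym4}, \eqref{7:sym5}). The consistency check of \eqref{7:G1} with \eqref{7:G} via $\chi_3(\r)=(a_3-s_3)^{1/2}$ on $z>0$ is exactly the right point to verify, and your verification is sound.
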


We introduce another solution of \eqref{2:Fuchs} by
\begin{equation}\label{7:F}
\F{3}(s_3)= \c{3}\left(\a{3} \P{3}(s_3)-\b{3} \Q{3}(s_3)\right),\quad s_3\in (a_2,a_3) .
\end{equation}
We determine $\c{3}$ from the  Wronskian
\begin{equation}\label{7:Wronskian}
\omega(s_3)\left(\E{3}{3}(s_3)\frac{d}{ds_3}\F{3}(s_3)-\F{3}(s_3)\frac{d}{ds_3}\E{3}{3}(s_3)\right) =1,
\end{equation}
which is equivalent to
\[ \c{3}=\frac{-1}{2\a{3}\b{3}} .\]
We define external 5-cyclidic harmonics of the third kind by
\begin{equation}\label{7:H}
\H{3}(\r):=\c{3}\G{3}(\sigma_3(\r)) \quad \text{for $\r\in\R^3\setminus K_2$}.
\end{equation}
In particular, we have
\begin{equation}\label{7:H1}
\H{3}(\r)=(\|\r\|^2+1)^{-1/2} \E{3}{1}(s_1)\E{3}{2}(s_2)\F{3}(s_3)\quad\text{for $\r\in R$.}
\end{equation}

\begin{thm}\label{7:t2}
Let $\n\in\N_0^2$ and $\p=(p_0,p_1,p_2)\in\{0,1\}^3$. Then $\H{3}$
is harmonic on $\R^3\setminus K_2$. The functions $\H{3}$ share the symmetries \eqref{7:sym7}, \eqref{7:sym8} with $\G{3}$.
Moreover,
\begin{equation}\label{7:est1}
\H{3}(\r) =O(\|\r\|^{-1})\quad\text{as $\|\r\|\to\infty$},
\end{equation}
and
\begin{equation}\label{7:est2}
\|\nabla \H{3}(\r)\| =O(\|\r\|^{-2})\quad\text{as $\|\r\|\to\infty$} .
\end{equation}
\end{thm}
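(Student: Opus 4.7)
The plan is to verify the three assertions in the same spirit as Theorems \ref{5:t2} and \ref{3:t2}, exploiting the fact that $\sigma_3$ is an isometric involution and that $\sigma_0,\sigma_1,\sigma_2,\sigma_3$ pairwise commute.

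For harmonicity, I would first observe that $\sigma_3$ sends $K_2$ onto $M_2$, because both sets are described by the same inequalities $g_2(x,z)\le 0$ in the plane $y=0$ and differ only by the sign of $z$. Hence $\sigma_3(\R^3\setminus K_2)=\R^3\setminus M_2$, so by Theorem \ref{7:t1} the composition $\r\mapsto \c{3}\G{3}(\sigma_3(\r))$ is well defined on $\R^3\setminus K_2$. Since $\sigma_3$ is a Euclidean reflection, composing a harmonic function with $\sigma_3$ yields a harmonic function, and $\H{3}$ is therefore harmonic on $\R^3\setminus K_2$. The formula \eqref{7:H1} on $R$ follows from the definitions \eqref{7:G0}, \eqref{7:F} together with the invariance of $s_1,s_2,s_3$ under $\sigma_3$.

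For the symmetries \eqref{7:sym7}, \eqref{7:sym8}, the key remark is that $\sigma_3$ commutes with each $\sigma_j$, $j=0,1,2$: for $j=1,2$ this is immediate from \eqref{2:reflections}, and for $j=0$ we have $\sigma_3(\sigma_0(\r))=\|\r\|^{-2}\sigma_3(\r)=\sigma_0(\sigma_3(\r))$ since $\|\sigma_3(\r)\|=\|\r\|$. Applying this to the definition $\H{3}=\c{3}\,\G{3}\circ\sigma_3$ and then invoking the corresponding symmetries of $\G{3}$ from Theorem \ref{7:t1} gives \eqref{7:sym7} and \eqref{7:sym8} for $\H{3}$ by a one-line computation.

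The step that requires a little more care is the asymptotic behavior \eqref{7:est1}, \eqref{7:est2}. Here I would mirror the argument used in Theorem \ref{3:t2}: show that the Kelvin transform of $\H{3}$ coincides, up to a sign, with $\H{3}$ itself, and that $\H{3}$ is analytic at the origin. For the first claim, using \eqref{7:sym7} (just established for $\H{3}$) and the commutation of $\sigma_0$ and $\sigma_3$,
\[
\|\r\|^{-1}\H{3}(\sigma_0(\r))=\|\r\|^{-1}\c{3}\G{3}(\sigma_0(\sigma_3(\r)))=(-1)^{p_0}\c{3}\G{3}(\sigma_3(\r))=(-1)^{p_0}\H{3}(\r).
\]
For the second claim, $\sigma_3(\0)=\0\notin M_2$, so $\G{3}$ is analytic at $\sigma_3(\0)$ by Theorem \ref{7:t1}, and therefore $\H{3}$ is analytic at $\0$. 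The standard theory of the Kelvin transform then converts analyticity at $\0$ into the estimates \eqref{7:est1}, \eqref{7:est2} at infinity. The main (minor) obstacle is really bookkeeping: one must be sure that $K_2$ and $M_2$ are genuine reflections of each other in the $z\mapsto-z$ symmetry, which is why I would start by verifying $\sigma_3(K_2)=M_2$ from the defining inequalities.
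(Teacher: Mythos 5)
Your proof is correct and follows the same route the paper intends: the paper omits this proof by appealing to the analogy with Theorem \ref{5:t2}, whose argument is exactly yours --- harmonicity and the symmetries \eqref{7:sym7}, \eqref{7:sym8} are read off from the definition \eqref{7:H} together with Theorem \ref{7:t1} (using that $\sigma_3$ is an orthogonal reflection carrying $K_2$ onto $M_2$ and commuting with $\sigma_0,\sigma_1,\sigma_2$), and the estimates \eqref{7:est1}, \eqref{7:est2} follow because the Kelvin transform of $\H{3}$ is $\pm\H{3}$, which is analytic at the origin since $\0\notin \overline{M_2}$. One minor imprecision in an aside that does not affect the theorem: \eqref{7:H1} does not follow merely from the invariance of $s_1,s_2,s_3$ under $\sigma_3$ (that alone would give $\G{3}\circ\sigma_3=\G{3}$); it requires the decomposition \eqref{7:G} together with the sign change \eqref{7:sym6} of $\J{3}$ under $\sigma_3$, which converts $\a{3}\P{3}+\b{3}\Q{3}$ into $\a{3}\P{3}-\b{3}\Q{3}$ and hence, after multiplying by $\c{3}$, into $\F{3}$ as in \eqref{7:F}.
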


\section{Expansion of the reciprocal distance in 5-cyclidic harmonics of third kind}\label{expansion3}

For fixed $s\in(a_2,a_3)$ the coordinate surface \eqref{2:surface} consists of two closed surfaces of genus $0$. One lies in the half-space $z>0$ and the other one is
obtained from it by reflection at the plane $z=0$.
We consider the region interior to the coordinate surface $s=d_3$ which lies in the half-space $\{\r: z>0\}$:
\begin{equation}\label{8:solid}
D_3:=\{\r\in \R^3: z>0, s_3<d_3\}.
\end{equation}

\begin{thm}\label{8:t1}
Let $d_3\in(a_2,a_3)$, $\n\in\N_0^2$, $\p\in\{0,1\}^3$. Then
\begin{equation}\label{8:integralformula}
\H{3}(\r')=\frac{1}{4\pi\omega(d_3)\{\E{3}{3}(d_3)\}^2}
\int_{\partial D_3} \frac{\G{3}(\r)}{h_3(\r)\|\r-\r'\|}\, dS(\r)
\end{equation}
for all $\r'\in\R^3\setminus\bar D_3$.
The scale factor $h_3$ is given by
\begin{equation}\label{8:h3}
 16 \{h_3(\r)\}^2=\frac{(\|\r\|^2-1)^2}{(d_3-a_0)^2}+ \frac{4x^2}{(d_3-a_1)^2}+\frac{4y^2}{(d_3-a_2)^2}+ \frac{4z^2}{(d_3-a_3)^2}.
\end{equation}
\end{thm}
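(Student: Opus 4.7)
The plan is to follow the proof of Theorem \ref{4:t1} essentially verbatim, with the second-kind objects replaced by their third-kind analogs and the Wronskian \eqref{7:Wronskian} taking the role of \eqref{3:Wronskian}. The decisive observation is that $\bar D_3\subset\{z\ge 0\}$ is disjoint from $M_2\subset\{z<0\}$, so $G:=\G{3}$ is harmonic on an open neighborhood of $\bar D_3$ by Theorem \ref{7:t1}; conversely, $K_2\subset\{z>0,\,s_3=a_2\}$ has $s_3=a_2<d_3$, so $K_2\subset D_3$ and $H:=\H{3}$ is harmonic on an open neighborhood of $\R^3\setminus D_3$ by Theorem \ref{7:t2}.

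First I would apply Green's formula \eqref{4:Greenformula} with $u=G$ and $v(\r):=(4\pi\|\r-\r'\|)^{-1}$ on $D=D_3$ to obtain $\int_{\partial D_3}(G\,\partial_\nu v-v\,\partial_\nu G)\,dS=0$. Then I would apply it a second time with $u=H$ and the same $v$ on $D=B_R(\0)\setminus\bar D_3\setminus B_\epsilon(\r')$; passing to the limit $\epsilon\to 0^+$ (via the standard representation formula) and then $R\to\infty$ (using the decay estimates \eqref{7:est1}, \eqref{7:est2}) yields
\[ H(\r')=-\int_{\partial D_3}\bigl(H\,\partial_\nu v-v\,\partial_\nu H\bigr)\,dS. \]

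Next I would multiply the first identity by $\F{3}(d_3)$ and the second by $\E{3}{3}(d_3)$ and add. From \eqref{7:G0} and \eqref{7:H1}, the equality $\F{3}(d_3)G(\r)=\E{3}{3}(d_3)H(\r)$ holds on $\partial D_3\cap R$, and the shared symmetries \eqref{7:sym7}, \eqref{7:sym8} of $G$ and $H$ propagate this to all of $\partial D_3$; consequently the $\partial_\nu v$ contributions cancel and one is left with
\[ \E{3}{3}(d_3)\,H(\r')=\int_{\partial D_3} v\,\bigl(\E{3}{3}(d_3)\,\partial_\nu H-\F{3}(d_3)\,\partial_\nu G\bigr)\,dS. \]
On $\partial D_3$ the orthogonality of 5-cyclidic coordinates gives $\partial_\nu=h_3^{-1}\partial_{s_3}$, so the factors $(\|\r\|^2+1)^{-1/2}\E{3}{1}(s_1)\E{3}{2}(s_2)$ common to $G$ and $H$ pull out unchanged, and the combination of $s_3$-derivatives of $\E{3}{3}$ and $\F{3}$ at $s_3=d_3$ collapses via the Wronskian \eqref{7:Wronskian} to $1/\omega(d_3)$. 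The integrand reduces to $G(\r)/\bigl(h_3(\r)\,\omega(d_3)\,\E{3}{3}(d_3)\bigr)$, and dividing through by $\E{3}{3}(d_3)$ produces \eqref{8:integralformula}.

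The only real obstacle is the bookkeeping to confirm the harmonicity locations, namely that $M_2$ stays out of $\bar D_3$ while $K_2$ sits inside $D_3$, and to verify that the symmetries of $G,H$ suffice to extend the boundary identity off of $\partial D_3\cap R$; once these are in place, the argument is a direct transcription of Theorem \ref{4:t1} with Wronskian \eqref{3:Wronskian} replaced by \eqref{7:Wronskian}.
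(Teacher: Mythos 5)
Your proposal is correct and matches the paper's intended argument: the paper omits this proof entirely, stating only that the third-kind harmonics are treated analogously to those of the first kind, whose Theorem \ref{6:t1} is in turn proved by transcribing Theorem \ref{4:t1} with the appropriate Wronskian. Your bookkeeping (that $\bar D_3$ avoids $M_2$, that $K_2\subset D_3$, and that the shared symmetries \eqref{7:sym7}, \eqref{7:sym8} suffice to propagate the boundary identity because $\partial D_3$ lies entirely in the half-space $z>0$) is exactly what is needed.
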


We obtain the expansion of the reciprocal distance in 5-cyclidic harmonics of the third kind.

\begin{thm}\label{8:t2}
Let $\r=(x,y,z),\r'=(x',y',z')\in\R^3$ with 5-cyclidic coordinates $s_3, s_3'$, respectively.
If either (a) $z,z'\ge 0$, $s_3<s_3'$, or (b) $z'<0<z$, or (c)   $z,z'\le0$, $s_3'<s_3$, then
\begin{equation}\label{8:expansion}
\frac{1}{\|\r-\r'\|}=2\pi\sum_{\n\in\N_0^2} \sum_{\p\in\{0,1\}^3}  \G{3}(\r)\H{3}(\r').
\end{equation}
\end{thm}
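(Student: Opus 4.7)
The plan is to mirror the proof of Theorem \ref{6:t2}, using the reflection $\sigma_3$ in place of the inversion $\sigma_0$. Cases (a) and (b) will be established by a completeness argument on the domain $D_3$ from \eqref{8:solid}, while case (c) will be reduced to case (a) via $\sigma_3$.

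For cases (a) and (b), I choose $d_3\in(a_2,a_3)$ with $s_3<d_3$, and additionally $d_3<s_3'$ in case (a). Then $\r\in\bar D_3$, whereas $\r'\in\R^3\setminus\bar D_3$: in case (a) because $s_3'>d_3$, and in case (b) because $z'<0$ while every point of $\bar D_3$ has $z\ge 0$. Hence $f(\q):=\|\q-\r'\|^{-1}$ is harmonic on an open set containing $\bar D_3$, and the completeness expansion for internal 5-cyclidic harmonics of the third kind from \cite{CohlVolkcyclide1} (the analog, for the third kind, of the first-kind formulas cited in the proof of Theorem \ref{6:t2}) produces
\[ \frac{1}{\|\r-\r'\|}=\sum_{\n\in\N_0^2}\sum_{\p\in\{0,1\}^3}\d{3}\,\G{3}(\r), \]
where
\[ \d{3}=\frac{1}{2\omega(d_3)\{\E{3}{3}(d_3)\}^2}\int_{\partial D_3}\frac{\G{3}(\q)}{h_3(\q)\,\|\q-\r'\|}\,dS(\q). \]
Comparing this with the integral representation \eqref{8:integralformula} of Theorem \ref{8:t1} yields $\d{3}=2\pi\,\H{3}(\r')$, and the claim \eqref{8:expansion} follows.

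For case (c), I apply $\sigma_3$. Because $s_3$ is $\sigma_3$-invariant and $\sigma_3$ negates the $z$-coordinate, the points $\tilde\r:=\sigma_3(\r')$ and $\tilde\r':=\sigma_3(\r)$ satisfy the hypotheses of case (a): both have nonnegative $z$-coordinate and $s_3(\tilde\r)=s_3'<s_3=s_3(\tilde\r')$. Applying the already proved case (a) to the pair $(\tilde\r,\tilde\r')$ and using the isometry $\|\tilde\r-\tilde\r'\|=\|\r-\r'\|$ gives
\[ \frac{1}{\|\r-\r'\|}=2\pi\sum_{\n\in\N_0^2}\sum_{\p\in\{0,1\}^3}\G{3}(\sigma_3(\r'))\,\H{3}(\sigma_3(\r)). \]
Finally, \eqref{7:H} supplies the identities $\H{3}(\sigma_3(\r))=\c{3}\,\G{3}(\r)$ and $\G{3}(\sigma_3(\r'))=(\c{3})^{-1}\H{3}(\r')$; the two factors of $\c{3}$ cancel, and \eqref{8:expansion} is recovered in case (c).

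The step I expect to be most delicate is the appeal to the third-kind completeness expansion in \cite{CohlVolkcyclide1} and its application on $D_3$, whose boundary $\partial D_3$ is not a single smooth coordinate surface but the union of a piece of the coordinate surface $s_3=d_3$ with the flat cap $\{z=0\}\cap\bar D_3$. As in the proof of Theorem \ref{6:t1}, one must check that Green's formula applies across this composite boundary and that the contribution from the flat cap combines correctly with the coordinate-surface piece to produce the clean expression for $\d{3}$ above. Given that this has already been carried out in \cite{CohlVolkcyclide1} for the third-kind setting, the remainder of the proof is a direct transcription of the argument for Theorem \ref{6:t2}.
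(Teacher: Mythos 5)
Your proof is correct and is exactly the argument the paper intends (the proofs in Sections 7--8 are omitted as ``analogous to the harmonics of the first kind''): completeness on $D_3$ combined with Theorem \ref{8:t1} handles cases (a) and (b), and case (c) reduces to (a) via $\sigma_3$ together with \eqref{7:H}, which is in fact simpler than the $\sigma_0$ reduction in Theorem \ref{6:t2} because $\sigma_3$ is an isometry and the factors $\c{3}$, $(\c{3})^{-1}$ cancel cleanly. One small correction to your closing remark: since $s_3\to a_3>d_3$ as $z\to 0$, the closure $\bar D_3$ is bounded away from the plane $z=0$, so $\partial D_3$ is the single closed coordinate surface $s_3=d_3$ lying in the open upper half-space and there is no flat cap to account for.
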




\end{document}